\documentclass[11pt,reqno]{amsart}
\usepackage[margin=1.1in]{geometry}
\usepackage{amsthm}
\usepackage{xcolor}
\usepackage{hyperref}
\usepackage[T1]{fontenc}
\usepackage{anyfontsize}
\usepackage{amsmath,amssymb,mathtools}
\usepackage[expansion=false]{microtype}
\hypersetup{
 colorlinks=true,linkcolor=blue!45!black,citecolor=blue!45!black,urlcolor=blue!45!black,
 pdftitle={Sobolev mixing constants and compactness in rational moduli space},
 pdfauthor={Alastair Fletcher and Ilya Krishtal},
 pdfsubject={Version 10, 13 September 2026; ETDS submission; critical Sobolev mixing and compactness in rational moduli space},
 pdfkeywords={uniformly quasiregular dynamics, Sobolev space, mixing, transfer operator, rational moduli space}
}
\theoremstyle{plain}
\newtheorem{theorem}{Theorem}[section]
\newtheorem{proposition}[theorem]{Proposition}
\newtheorem{lemma}[theorem]{Lemma}
\newtheorem{corollary}[theorem]{Corollary}
\theoremstyle{definition}

\theoremstyle{remark}
\newtheorem{remark}[theorem]{Remark}

\newcommand{\R}{\mathbb R}
\newcommand{\C}{\mathbb C}
\newcommand{\T}{\mathbb T}

\newcommand{\E}{\mathbb E}
\newcommand{\Lop}{\mathcal L}
\newcommand{\Hcal}{\mathcal H}
\newcommand{\Bcal}{\mathcal B}
\newcommand{\Dcal}{\mathcal D}
\newcommand{\Tr}{\operatorname{Tr}_{\mu}}
\newcommand{\norm}[1]{\left\lVert #1\right\rVert}
\newcommand{\abs}[1]{\left\lvert #1\right\rvert}
\newcommand{\ip}[2]{\left\langle #1,#2\right\rangle}
\newcommand{\dd}{\mathrm d}
\DeclareMathOperator{\vol}{vol}
\DeclareMathOperator{\supp}{supp}
\DeclareMathOperator{\spec}{\sigma}

\DeclareMathOperator{\dist}{dist}

\numberwithin{equation}{section}

\begin{document}

\title{Sobolev mixing constants and compactness in rational moduli space}

\author[A. Fletcher]{Alastair Fletcher}
\address{Department of Mathematical Sciences, Northern Illinois University,
DeKalb, IL 60115, USA}
\email{afletcher@niu.edu}

\author[I. Krishtal]{Ilya Krishtal}
\address{Department of Mathematical Sciences, Northern Illinois University,
DeKalb, IL 60115, USA}
\email{ikrishtal@niu.edu}

\keywords{Uniformly quasiregular mapping, transfer operator, Sobolev trace, decay of correlations, rational moduli space}
\subjclass[2020]{37F10, 30C65, 37A25, 37A50, 47A10, 46E35}

\begin{abstract}
We study uniformity of Sobolev mixing estimates for rational maps and uniformly quasiregular mappings. For rational maps of fixed degree, the optimal centered Sobolev trace constant $A_f$ is a continuous proper function on M\"obius moduli space. Uniform bounds on $A_f$ therefore characterize relative compactness in moduli, and a minimizing class exists in every degree. In the setting of uniformly quasiregular endomorphisms of degree $d\ge2$ on closed $n$-manifolds with an invariant conformal structure, the $k$th centered transfer operator from critical Sobolev energy into $L^1$ of the equilibrium measure has norm $A_f d^{-k/n}$. On the mean-zero Sobolev space, the spectrum and Fredholm essential spectrum are the closed disk of radius $d^{-1/n}$, with infinite-dimensional eigenspaces throughout its interior. The proofs use the energy scaling of pullback, a bounded equilibrium trace, and an obstruction from atoms of intermediate mass. Combined with conformal barycenter normalization and DeMarco--Faber's degeneration theorem, this obstruction gives the moduli compactness criterion. Explicit families illustrate the distinction between degeneration and concentration caused by changes of coordinates.
\end{abstract}

\maketitle

\section{Introduction}

For rational maps of degree $d\ge2$, correlations of continuous $W^{1,2}$ observables against bounded observables admit an exponential bound with factor $d^{-k/2}$, with respect to the equilibrium measure \cite{DNS07,DS06}. We study the dependence of the optimal prefactor on the map.

For a family of fixed degree, the common exponential factor leaves a uniformity question: when can the prefactors be bounded independently of the map?

We show that a single quantity decides this. An optimal trace constant $A_f$, defined by an inequality independent of the iterate, determines the norm of every centered transfer iterate. It is unchanged by conformal changes of coordinates and defines a continuous proper function on the moduli space of degree $d$ maps. In particular, it tends to infinity along any sequence escaping every compact subset of moduli space. Uniform critical Sobolev mixing estimates across a family are therefore equivalent to relative compactness of the family in moduli, and a minimizing class exists in every degree.

The trace inequality compares the oscillation of a function, measured against the equilibrium measure on the Julia set, with its Dirichlet energy, measured on the whole sphere. Since the equilibrium measure may be singular with respect to spherical area, a bounded trace is needed to extend this comparison to Sobolev classes. This is also where degeneration shows up. At the critical Sobolev exponent a point has zero capacity. If a weak limit retains positive mass both at a point and away from it, suitable cutoffs have vanishing energy but positive centered oscillation, forcing the optimal constant to diverge. The same trace and capacity arguments apply in higher dimensions, and we develop them for uniformly quasiregular mappings.

\subsection{Uniformity and rational moduli}

Throughout, $S^2$ carries the spherical metric, and $m$ denotes normalized spherical area. The normalization of $m$ depends on this choice of metric, not only on its conformal class. Let $f$ be a rational map of degree $d\ge2$ on $S^2$, and let $\mu=\mu_f$ be its equilibrium measure. Set
\begin{equation}\label{eq:rational-energy}
 E(\varphi)=\left(\int_{S^2}|\dd\varphi|^2\,\dd m\right)^{1/2},
 \qquad \Dcal=C(S^2)\cap W^{1,2}(S^2).
\end{equation}
The optimal centered trace constant is
\begin{equation}\label{eq:optimal-trace}
 A_f=\sup_{\substack{\varphi\in\Dcal\\E(\varphi)>0}}
 \frac{\norm{\varphi-\mu(\varphi)}_{L^1(\mu)}}{E(\varphi)},
 \qquad \mu(\varphi)=\int\varphi\,\dd\mu.
\end{equation}
Thus $A_f$ compares the mean oscillation of an observable $\varphi\in\Dcal$ with its Dirichlet energy. Theorem~\ref{thm:main} below gives its finiteness and its exact interpretation as a mixing prefactor.

Let $\mathrm{Rat}_d$ be the space of rational maps of degree $d$, with its coefficient topology, and let
\[
 \mathcal M_d=\mathrm{Rat}_d/\mathrm{PSL}_2(\C)
\]
be the space of M\"obius conjugacy classes, equipped with the quotient topology. The conjugation action is proper for $d\ge2$ \cite[Lemma 4.3]{BFN22}, hence $\mathcal M_d$ is Hausdorff and locally compact. Conformal invariance of the Dirichlet energy makes $A_f$ constant on these conjugacy classes.

\begin{theorem}\label{thm:moduli-properness}
For every $d\ge2$, the function $f\mapsto A_f$ descends to a continuous function on $\mathcal M_d$ whose sublevel sets
\begin{equation}\label{eq:moduli-sublevel}
 \mathcal K_{d,C}=\{[f]\in\mathcal M_d:A_f\le C\}
\end{equation}
are compact for every $C>0$. In particular, for any family $\mathcal F\subset\mathrm{Rat}_d$,
\begin{equation}\label{eq:moduli-uniformity}
 \sup_{f\in\mathcal F}A_f<\infty
 \quad\Longleftrightarrow\quad
 \{[f]:f\in\mathcal F\}\text{ is relatively compact in }\mathcal M_d.
\end{equation}
Consequently, $A_f$ is a proper function on $\mathcal M_d$.
\end{theorem}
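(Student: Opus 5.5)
The proof has four steps: conformal invariance, finiteness, continuity, and properness. Properness is the real content.

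\emph{Step 1: invariance and finiteness.} If $g=M\circ f\circ M^{-1}$ with $M\in\mathrm{PSL}_2(\C)$, then $\mu_g=M_*\mu_f$. The Dirichlet integral $\int|\dd\varphi|^2\,\dd m$ is conformally invariant in dimension two, so $E(\varphi\circ M^{-1})=E(\varphi)$. The map $\varphi\mapsto\varphi\circ M^{-1}$ is a bijection of $\Dcal$ that intertwines the two centered $L^1$ norms, hence $A_g=A_f$ and $A$ descends to $\mathcal M_d$. Finiteness of $A_f$ will be taken from the bounded equilibrium trace used for the main theorem. That trace comes from the fact that $\mu_f$ has H\"older continuous potential (Dinh--Sibony), hence $\mu_f$ acts boundedly on $W^{1,2}$ via $\mu(\varphi)=\int\varphi\,\dd\mu$ and a $dd^c$-type estimate $\|\varphi-\mu(\varphi)\|_{L^1(\mu)}\lesssim E(\varphi)$.

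\emph{Step 2: continuity.} Since $\mathcal M_d$ carries the quotient topology, it suffices to prove continuity on $\mathrm{Rat}_d$. The equilibrium measure depends continuously on $f$ in the weak topology (Mañé, Lyubich), and in fact its potentials vary continuously in $C^0$. For lower semicontinuity, fix $\varphi\in\Dcal$. The ratio $\|\varphi-\mu_f(\varphi)\|_{L^1(\mu_f)}/E(\varphi)$ is continuous in $f$ because $|\varphi-c|$ is continuous, so $A_f$, being a supremum of continuous functions, is lower semicontinuous.

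For upper semicontinuity I need a uniform trace estimate. For $\varphi$ with $E(\varphi)=1$ and mean zero against $m$, I would show
\[
\int|\varphi|\,\dd(\mu_{f_j}-\mu_f)\to0
\]
uniformly in $\varphi$ as $f_j\to f$. Writing $\mu_{f_j}-\mu_f=dd^c(u_j-u)$ with $\|u_j-u\|_{C^0}\to0$, and using $|\varphi|\in W^{1,2}$, integration by parts and a truncation argument should bound this difference by a quantity like $\|u_j-u\|_{C^0}^{1/2}$ times the energy. I expect to use Step 1's bounded-trace machinery uniformly on a compact neighborhood of $f$.

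\emph{Step 3: properness.} Here I take a sequence $[f_j]$ escaping every compact set of $\mathcal M_d$ and show $A_{f_j}\to\infty$. Following the abstract, I normalize each representative by conformal barycenter (Douady--Earle) so that $\mu_{f_j}$ has barycenter $0$ in the ball model. By DeMarco--Faber, after passing to a subsequence the measures $\mu_{f_j}$ converge weakly to a limit $\mu_\infty$ that has an atom of mass in $(0,1)$. The barycenter normalization is what prevents total collapse to a point mass, which would give mass exactly $1$.

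Let $p$ be such an atom, of mass $a\in(0,1)$. Since points have zero $W^{1,2}$-capacity, there are cutoffs $\chi_\varepsilon\in\Dcal$ with $0\le\chi_\varepsilon\le1$, $\chi_\varepsilon=1$ near $p$, support in $B(p,\varepsilon)$, and $E(\chi_\varepsilon)\to0$ (log cutoffs). For fixed $\varepsilon$ with $\mu_\infty(\partial B)=0$, weak convergence gives
\[
\|\chi_\varepsilon-\mu_{f_j}(\chi_\varepsilon)\|_{L^1(\mu_{f_j})}\to\|\chi_\varepsilon-\mu_\infty(\chi_\varepsilon)\|_{L^1(\mu_\infty)}.
\]
The limit is at least $2b(1-b)$, where $b=\mu_\infty(\chi_\varepsilon)\in[a,a+o(1)]$, so it stays bounded below by roughly $2a(1-a)>0$. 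Hence
\[
\liminf_j A_{f_j}\ge \frac{c}{E(\chi_\varepsilon)}
\]
for every small $\varepsilon$, and $A_{f_j}\to\infty$.

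\emph{Step 4: conclusion.} Sublevel sets $\mathcal K_{d,C}$ are closed by continuity and cannot contain escaping sequences by Step 3. Since $\mathcal M_d$ is locally compact Hausdorff and (presumably) $\sigma$-compact, they are compact. The equivalence \eqref{eq:moduli-uniformity} follows: the forward direction from compactness of $\mathcal K_{d,C}$, and the reverse from continuity of $A$ on a compact closure.

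The main obstacle I expect is the upper semicontinuity in Step 2, which needs a trace bound uniform in $f$ near a given map. The properness step is conceptually clean once DeMarco--Faber's intermediate-atom statement is available in barycenter-normalized coordinates.
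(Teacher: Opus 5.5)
Your proof is correct in outline, and its properness core is the same as the paper's: barycenter normalization, DeMarco--Faber's pure atomicity, and logarithmic cutoffs at an atom of intermediate mass. The differences are in how continuity is obtained and in the final topological step.

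\textbf{Continuity.} The paper uses no potentials. It bounds $\norm{\mu_{f_j}-\mu_f}_{E^*}$ by the telescoping tail $(P_{f_j}+P_f)r^N$, where $P_{f_j}$ stays bounded because the one-step Jacobians are uniformly bounded. It adds the finite-iterate term $S_1d^{-N}\norm{J_{f_j^N}-J_{f^N}}_\infty$. It then applies the Lipschitz estimate of Lemma~\ref{lem:trace-stability}, which comes from applying the dual bound to $|v-\mu(v)|$.

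Your bound $\norm{\mu_{f_j}-\mu_f}_{E^*}\lesssim\norm{g_{f_j}-g_f}_\infty^{1/2}$ is a valid substitute. It follows from Cauchy--Schwarz on $\int\dd\psi\wedge\dd^c w$ together with $\int\dd w\wedge\dd^c w=-\int w\,\dd\dd^c w\le2\norm{w}_\infty$, where $w=g_{f_j}-g_f$. Once you have it, the same Lipschitz estimate gives continuity in both directions. Your separate lower-semicontinuity step via Ma\~n\'e--Lyubich is then redundant.

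Your route gives a modulus in terms of Green functions but relies on structure special to one complex dimension. The paper's route uses only the energy contraction and Jacobian bounds, and yields \eqref{eq:continuity-modulus}.

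\textbf{Compactness.} The paper proves directly that the normalized set $\mathcal N_{d,C}$ is compact in $\mathrm{Rat}_d$ by extracting limits in $\mathbb P^{2d+1}$. It then obtains $\mathcal K_{d,C}$ as the image of $\mathcal N_{d,C}$ under the quotient map. Your ``closed with no escaping sequences'' argument also works. The $\sigma$-compactness you flagged as ``presumably'' does hold, since $\mathcal M_d$ is the continuous image of the open set $\mathrm{Rat}_d\subset\mathbb P^{2d+1}$.

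\textbf{Details to supply or correct.}
\begin{enumerate}
\item Explain why the normalization survives the limit. A Douady--Earle barycenter at the origin is equivalent to $\int_{S^2}x\,\dd\nu(x)=0$, as in Lemma~\ref{lem:barycenter-normalization}. This linear condition is weakly closed and excludes every $\delta_p$. You cannot instead appeal to continuity of the barycenter map, because it is undefined on limits with an atom of mass at least $1/2$.
\item State why DeMarco--Faber applies. Your normalized representatives leave every compact subset of $\mathrm{Rat}_d$, which follows from continuity of the quotient map.
\item The inequality $\norm{\chi_\varepsilon-b}_{L^1(\mu_\infty)}\ge2b(1-b)$ is false for general $[0,1]$-valued cutoffs. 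Use dominated convergence with $\chi_\varepsilon\to\mathbf 1_{\{p\}}$, which gives the limit $2a(1-a)$. Alternatively, use the bound $a(1-b)$ coming from the atom alone.
\end{enumerate}
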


In particular, $A_{f_j}\to\infty$ whenever $[f_j]$ escapes every compact subset of $\mathcal M_d$. The proof combines continuity of $A_f$ with the degeneration theory of DeMarco \cite{DeM05} and DeMarco--Faber \cite{DF14}, which shows that a boundary limit of equilibrium measures is purely atomic. A single point mass would be harmless, since centering removes it; conformal barycenter normalization \cite{DE86} excludes that case and leaves an atom of mass strictly between zero and one. Such an atom forces the centered trace constants to diverge, because a point has zero critical Sobolev capacity. A minimizing class exists in every degree; see Corollary~\ref{cor:moduli-minimum}.

\subsection{Critical energy and the transfer operator}

The analytic argument extends to uniformly quasiregular mappings, whose iterates satisfy a common bound $K$ on the maximal dilatation. Let $M$ be a closed, connected, oriented Riemannian manifold of dimension $n\ge2$, and let $f\colon M\to M$ be a uniformly quasiregular endomorphism of degree $d\ge2$. We now write $m=\vol_M/\vol_M(M)$ and $\mu=\mu_f$. Okuyama and Pankka \cite{OP14} constructed this non-atomic equilibrium measure, proved the balance relation $f^*\mu=d\mu$, and established strong mixing. The associated normalized transfer operator is
\begin{equation}\label{eq:intro-transfer}
 \Lop\varphi(y)=\frac1d\sum_{x\in f^{-1}(y)}i(x,f)\varphi(x),
\end{equation}
where $i(x,f)$ denotes the local index. We write $U\varphi=\varphi\circ f$ for pullback.

Fix an $f$-invariant measurable conformal structure $G$, whose existence follows from Iwaniec and Martin \cite[Theorem 5.1]{IM96}; see also \cite[Section 21.5]{IM01}. Its formulation on a closed manifold is recalled in \cite[Section 4]{OP14}. Denote the induced covector norm by $|\cdot|_G$ and extend the preceding notation by setting
\begin{equation}\label{eq:energy-intro}
 E(\varphi)=\left(\int_M|\dd\varphi|_G^n\,\dd m\right)^{1/n},
 \qquad \Dcal=C(M)\cap W^{1,n}(M),
 \qquad r=d^{-1/n}.
\end{equation}
The space $W^{1,n}(M)$ uses the smooth background metric. Bounded ellipticity of $G$ makes $E$ equivalent to the usual gradient seminorm, so $E$ vanishes precisely on constants. We define $A_f$ by \eqref{eq:optimal-trace} with this energy, test space, and equilibrium measure. Its dependence on $G$ and the background metric is understood. For rational maps with the standard conformal structure, this recovers \eqref{eq:rational-energy}.

The usefulness of the critical exponent is expressed by three relations, proved in Lemma~\ref{lem:energy}:
\begin{equation}\label{eq:intro-operator-mechanism}
 \Lop U=I,\qquad E(U\varphi)=r^{-1}E(\varphi),\qquad
 E(\Lop\varphi)\le rE(\varphi),\qquad\varphi\in\Dcal.
\end{equation}
The first is immediate from the index sum $\sum_{f(x)=y}i(x,f)=d$; the other two use invariance of $G$ and the change of variables for quasiregular maps. In the rational case, pullback by a map of degree $d$ multiplies the Dirichlet integral by $d$. The transfer contraction follows from the Cauchy--Schwarz calculation in the proof of \cite[Proposition 4.2]{DNS07}, with the perturbation parameter set to zero.
The energy contraction gives an upper estimate, while the right inverse supplies a matching lower bound. A bounded trace into the equilibrium measure connects these energy identities with mixing.

\begin{theorem}\label{thm:main}
There is a bounded trace map $\Tr\colon W^{1,n}(M)\to L^1(\mu)$ which agrees with restriction on $\Dcal$. The constant $A_f$ is finite and positive. For every integer $k\ge0$,
\begin{equation}\label{eq:sharp-mixing}
 \sup_{\substack{\varphi\in\Dcal\\E(\varphi)>0}}
 \frac{\norm{\Lop^k\varphi-\mu(\varphi)}_{L^1(\mu)}}{E(\varphi)}
 =A_f d^{-k/n}.
\end{equation}
\end{theorem}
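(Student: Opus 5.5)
Three steps: build a bounded trace, show $0<A_f<\infty$, then get the two inequalities in \eqref{eq:sharp-mixing} from \eqref{eq:intro-operator-mechanism}. For the trace, I would use that $\mu$ has a continuous (indeed Hölder) potential in the sense of Okuyama--Pankka. Concretely, $\mu$ should satisfy a bound
\[
\int|\psi|\,\dd\mu\le C\bigl(\norm{\psi}_{L^1(m)}+E(\psi)\bigr)
\]
for $\psi\in\Dcal$, i.e. it lies in the dual of $W^{1,n}$ in the appropriate sense. One route is to write $\mu=\lim d^{-k}(f^k)^*m$ and control the $\mu$-integral of $|\psi|$ through the quasiregular potential for $\mu$ and integration by parts, in the spirit of the $L^1$ duality in \cite{DNS07}. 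An alternative route is a Moser--Trudinger type capacity estimate: $\mu$ charges no set of $n$-capacity zero, with a quantitative bound $\mu(B(x,\rho))\lesssim (\log 1/\rho)^{-(n-1)}$, or better a Hölder bound.

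Given such a bound, density of $\Dcal$ in $W^{1,n}$ extends restriction to a bounded $\Tr$. Applying the bound to $\psi=\varphi-m(\varphi)$ and using the Poincaré inequality $\norm{\psi}_{L^1(m)}\lesssim E(\varphi)$ (from equivalence of $E$ with the gradient seminorm) gives
\[
\norm{\varphi-m(\varphi)}_{L^1(\mu)}\lesssim E(\varphi).
\]
Centering at $\mu(\varphi)$ instead costs at most a factor $2$, so $A_f<\infty$. Positivity follows because $\mu$ is non-atomic, hence not a point mass, and so a smooth $\varphi$ can separate two regions of positive $\mu$-mass.

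For the upper bound in \eqref{eq:sharp-mixing}, I use $\Lop$-duality: $\int \Lop\varphi\,\dd\mu=\int\varphi\,\dd\mu$, which follows from $f^*\mu=d\mu$. Hence $\mu(\Lop^k\varphi)=\mu(\varphi)$, and by definition of $A_f$,
\[
\norm{\Lop^k\varphi-\mu(\varphi)}_{L^1(\mu)}\le A_f E(\Lop^k\varphi)\le A_f r^k E(\varphi),
\]
using the energy contraction in \eqref{eq:intro-operator-mechanism}. One needs $\Lop^k\varphi\in\Dcal$; continuity of $\Lop\varphi$ follows from continuity of the index-weighted preimage sum, and the Sobolev bound is part of Lemma~\ref{lem:energy}. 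If $E(\Lop^k\varphi)=0$, then $\Lop^k\varphi$ is constant, equal to $\mu(\varphi)$, and the inequality is trivial.

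For the lower bound, given $\psi\in\Dcal$ with $E(\psi)>0$, set $\varphi=U^k\psi$. Then $\Lop^k\varphi=\psi$ and $\mu(\varphi)=\mu(\Lop^k\varphi)=\mu(\psi)$, while $E(\varphi)=r^{-k}E(\psi)$. The ratio for $\varphi$ is therefore $r^k$ times the ratio for $\psi$, and taking the supremum over $\psi$ gives equality. This requires $U\psi\in\Dcal$, which holds because a composition of a continuous $W^{1,n}$ function with a quasiregular map is again continuous and $W^{1,n}$, with the energy identity from Lemma~\ref{lem:energy}.

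The main obstacle is the first step, the trace inequality for $\mu$ at the critical exponent. It needs a genuinely quantitative regularity of $\mu$, such as Hölder continuity of the potential or a modulus of continuity for $\mu$ of balls strong enough to integrate against $n$-capacity. I would first try to derive it from the Okuyama--Pankka construction via a telescoping sum $\sum_k d^{-k}\bigl((f^{k+1})^*-d(f^k)^*\bigr)m$. Each term would be bounded in the dual of $W^{1,n}$ by $C r^{k}$-type factors, using the same energy scaling, so the sum converges geometrically.
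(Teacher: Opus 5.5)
Structurally this is the paper's proof. The telescoping route you settle on for the trace is Lemma~\ref{lem:telescoping}. Your upper bound (stationarity plus $E(\Lop^k\varphi)\le r^kE(\varphi)$), your lower bound via the right inverse $U^k$, and positivity via non-atomicity all match the paper.

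The one missing ingredient is in the step you flag as the main obstacle. Writing $m_k=d^{-k}(f^k)^*m$, the $k$th telescoping term pairs with $v$ as
\[
 m_{k+1}(v)-m_k(v)=\int_M\rho_f\,\Lop^kv\,\dd m,\qquad \rho_f=\frac{J_f}{d}-1 .
\]
So energy scaling only produces the factor $r^k$. It does not show that the first term $\rho_f\,m$ is a bounded functional for $E$ in the first place. At the critical exponent this is not automatic, because $W^{1,n}$ does not embed in $L^\infty$ and $J_f$ is a priori only integrable. The paper supplies it from higher integrability of quasiregular maps, $J_f\in L^q(m)$ for some $q>1$. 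Combined with H\"older and Sobolev--Poincar\'e in $L^{q'}$, this gives $b_f\le S_{q'}\norm{\rho_f}_{L^q(m)}<\infty$, as in \eqref{eq:dual-bound}. For rational maps $J_f$ is bounded and this step is trivial.

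There are two smaller points. First, the resulting estimate $|\mu(v)-m(v)|\le P_fE(v)$ controls a signed functional. It becomes your inequality for $\int|\psi|\,\dd\mu$ only after you apply it to $w=|\psi|$ (or $|v-m(v)|$) and use $E(w)\le E(\psi)$. That is the content of Lemma~\ref{lem:trace}, and you should state it explicitly.

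Second, the ball-mass alternative would not work as stated. A bound $\mu(B(x,\rho))\lesssim(\log1/\rho)^{-(n-1)}$ is borderline and does not by itself put $\mu$ in the dual of $W^{1,n}$. The telescoping argument avoids needing any a priori regularity of $\mu$.
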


For $\varphi\in\Dcal$ and $\psi\in L^\infty(\mu)$, transfer duality therefore gives
\begin{equation}\label{eq:intro-correlation}
 \abs{\int_M\varphi(\psi\circ f^k)\,\dd\mu-\mu(\varphi)\mu(\psi)}
 \le A_f d^{-k/n}E(\varphi)\norm{\psi}_{L^\infty(\mu)}.
\end{equation}
The factor $A_f d^{-k/n}$ is optimal when both observables vary over the indicated unit balls; the maximizing or approximating observables may depend on $k$. Because the constant in \eqref{eq:sharp-mixing} is exact, a family admits uniform critical Sobolev mixing estimates precisely when its trace constants are uniformly bounded. In the rational case, Theorem~\ref{thm:moduli-properness} identifies that condition with relative compactness in moduli. A quantitative bound for $A_f$ in terms of the normalized Jacobian discrepancy is given in Proposition~\ref{prop:prefactor-bound}.

The right inverse also determines the operator norm on the centered Sobolev space. Non-injectivity of $f$ supplies an infinite-dimensional transfer kernel; iterated pullbacks of its elements yield the eigenvectors for the next result. Here $W^{1,n}(M)$ is complexified, and $\mu(v)=\int_M\Tr v\,\dd\mu$, where $\Tr$ is the trace map of Theorem~\ref{thm:main}. A Sobolev class does not determine the values of an arbitrary representative on a set of volume zero. The trace assigns a canonical $L^1(\mu)$ class by completion from smooth functions and recovers \eqref{eq:optimal-trace} on $\Dcal$.

\begin{theorem}\label{thm:spectrum-intro}
The transfer operator extends boundedly to $W^{1,n}(M)$. On
\[
 \Hcal_0=\{v\in W^{1,n}(M):\mu(v)=0\},
\]
the energy $E$ is a complete norm, and
\begin{equation}\label{eq:spectrum-intro}
 \norm{\Lop^k|_{\Hcal_0}}=r^k,
 \qquad
 \spec(\Lop|_{\Hcal_0})=\{z\in\C:|z|\le r\}.
\end{equation}
Every $|z|<r$ is an eigenvalue of $\Lop|_{\Hcal_0}$ with infinite-dimensional eigenspace. Its Fredholm essential spectrum is also the closed disk $\{|z|\le r\}$. Consequently,
\[
 \spec(\Lop\colon W^{1,n}(M)\to W^{1,n}(M))
 =\{1\}\cup\{z\in\C:|z|\le r\},
\]
and $1$ is a simple isolated eigenvalue.
\end{theorem}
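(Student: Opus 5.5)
The plan is to reduce everything to the three relations \eqref{eq:intro-operator-mechanism}, extended from $\Dcal$ to $W^{1,n}(M)$ by density, together with the bounded trace of Theorem~\ref{thm:main}.

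\emph{Step 1: extension and the norm on $\Hcal_0$.} Smooth functions are dense in $W^{1,n}(M)$, and $E(\Lop\varphi)\le rE(\varphi)$ holds on $\Dcal$. It remains to control the $L^n$ part. Since $\Lop$ is positive and preserves constants, $|\Lop\varphi|^n\le\Lop|\varphi|^n$ by Jensen. Combined with boundedness of the transfer operator on $L^1(m)$ (from the change of variables for quasiregular maps), this gives boundedness on $L^n$, so $\Lop$ extends boundedly to $W^{1,n}(M)$. For completeness of $E$ on $\Hcal_0$ I would use a Poincaré inequality adapted to the trace: $\norm{v-\mu(v)}_{L^1(\mu)}\le A_fE(v)$ by density. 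Arguing by contradiction through Rellich compactness, if $v_j\in\Hcal_0$ had $\norm{v_j}_{W^{1,n}}=1$ and $E(v_j)\to0$, a subsequence would converge in $W^{1,n}$ to a constant $c$ with $\mu(c)=0$ by continuity of $\Tr$. Then $c=0$, which contradicts the normalization. Hence $E$ is equivalent to the Sobolev norm on $\Hcal_0$. Duality $\mu(\Lop v)=\mu(v)$ on $\Dcal$, extended by continuity, shows $\Lop\Hcal_0\subset\Hcal_0$; the same argument applies to $U$, since $\mu(U\varphi)=\mu(\varphi)$ by invariance.

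\emph{Step 2: norms.} The upper bound $\norm{\Lop^k|_{\Hcal_0}}\le r^k$ is immediate from the energy contraction. For the lower bound, take any nonzero $v\in\Hcal_0$ and put $w=U^kv\in\Hcal_0$. Then $\Lop^kw=v$ and $E(w)=r^{-k}E(v)$, so the ratio $E(\Lop^kw)/E(w)$ equals $r^k$. Thus the spectral radius is at most $r$, and the spectrum lies in the closed disk.

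\emph{Step 3: eigenvectors.} Take $g\in\Hcal_0$ with $\Lop g=0$ and $g\ne0$. For $|z|<r$, set $v=\sum_{j\ge0}z^jU^jg$. This series converges in $E$ because $E(z^jU^jg)=(|z|/r)^jE(g)$. Using $\Lop U=I$ and $\Lop g=0$, we get $\Lop v=\sum_{j\ge1}z^jU^{j-1}g=zv$. Since $E(Uu)=r^{-1}E(u)$, $U$ is injective, and the map $g\mapsto v$ is injective on $\ker\Lop$ (apply $\Lop$-expansion: $g=v-zUv$). So every eigenspace is at least as large as $\ker\Lop\cap\Hcal_0$.

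\emph{Step 4 (main obstacle): infinite-dimensionality of $\ker\Lop$.} I would construct the kernel elements locally. Choose a small ball $B$ avoiding the critical values and the branch set, so that $f^{-1}(B)$ consists of $d$ disjoint sheets $B_1,\dots,B_d$ mapped homeomorphically. Pick distinct sheets $B_1,B_2$ with inverse branches $h_1,h_2$, and take $\psi\in C_c^\infty(B)$. Define $g=\psi\circ f$ on $B_1$, $-\psi\circ f$ on $B_2$, and $0$ elsewhere. Then $\Lop g=(\psi-\psi)/d=0$, and $g$ is continuous and lies in $W^{1,n}$. We also have $\mu(g)=\mu(\Lop g)=0$. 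Varying $\psi$ over an infinite linearly independent family gives an infinite-dimensional kernel. The delicate point is the quasiregular regularity: the branch set has measure zero and its image is a closed nowhere dense set, so such a ball $B$ exists.

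\emph{Step 5: the full disk, essential spectrum, and $W^{1,n}$.} The spectrum is closed and contains the open disk of eigenvalues, so it equals the closed disk. On the interior, $\Lop-z$ has infinite-dimensional kernel and is therefore not Fredholm. Fredholm operators form an open set, so the boundary circle is also excluded from the Fredholm region. Hence the essential spectrum is the whole closed disk.

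For the full space, write $W^{1,n}=\C\mathbf 1\oplus\Hcal_0$. The projection $v\mapsto\mu(v)\mathbf1$ is bounded, and both summands are invariant under $\Lop$, with $\Lop\mathbf 1=\mathbf 1$. The spectrum is therefore the union $\{1\}\cup\{|z|\le r\}$. Since $r<1$, the eigenvalue $1$ is isolated, and its Riesz projection is the rank-one projection above, so $1$ is simple.
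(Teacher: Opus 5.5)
Your Steps 2--5 essentially reproduce the paper's proof. Step 1, however, rests on a claim that is false in the generality of the theorem: $\Lop$ is not bounded on $L^1(m)$ for a general uniformly quasiregular map. By change of variables, $\int_M\Lop|\varphi|\,\dd m=d^{-1}\int_M|\varphi|J_f\,\dd m$. Hence $\Lop$ is bounded on $L^1(m)$ exactly when $J_f\in L^\infty(m)$.

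That holds for rational maps, but not in general. Conjugate $z^2$ by a quasiconformal map that equals the radial stretch $z\mapsto p+(z-p)|z-p|^{\alpha-1}$, with $\alpha>1$, near $p=1/2$, and equals the identity near $1/4$. The result is uniformly quasiregular with $J_f(w)\asymp|w-p|^{2/\alpha-2}\to\infty$ as $w\to p$.

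The missing ingredient is higher integrability: $J_f\in L^q(m)$ for some $q>1$. With it, your Jensen step gives $\norm{\Lop\varphi}_{L^n(m)}^n\le d^{-1}\norm{J_f}_{L^q(m)}\norm{\varphi}_{L^{nq'}(m)}^n$, where $q'=q/(q-1)$. Sobolev embedding then bounds the last factor by the $W^{1,n}$ norm. The paper uses the same ingredient, but only to control the mean. It writes $m(\Lop v)=m(v)+\eta_f(v)$ with $|\eta_f(v)|\le b_fE(v)$, and works in the equivalent norm $|m(v)|+E(v)$.

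A related omission: you apply $U$ on all of $\Hcal_0$ (for $U^kv$ and in $g=v-zUv$) without extending it, and your Step 1 argument does not cover pullback. No $L^n$ bound is needed here. On the dense subspace $\Dcal\cap\Hcal_0$ you have $E(U\varphi)=r^{-1}E(\varphi)$ and $\mu(U\varphi)=\mu(\varphi)$, and $E$ is complete on $\Hcal_0$. So $U$ extends with the same exact scaling. The paper does this with the norm $|\mu(v)|+E(v)$.

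With these repairs, the remaining steps match the paper:
\begin{itemize}
\item your identity $g=v-zUv$ is its $Qv_\lambda=w$ with $Q=I-U\Lop$;
\item the boundary circle is handled by openness of the Fredholm set;
\item the splitting $\C\mathbf 1\oplus\Hcal_0$ is the same.
\end{itemize}
Your two-sheet kernel elements $(\psi\circ f)(\mathbf 1_{B_1}-\mathbf 1_{B_2})$ are a valid, more explicit alternative to the paper's $Q(C_c^\infty(W))$. Your Rellich argument for completeness of $E$ on $\Hcal_0$ is a valid alternative to the paper's norm equivalence via $|\mu(v)-m(v)|\le P_fE(v)$.
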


Sobolev methods for weighted transfer operators in rational dynamics occur already in Smirnov \cite[Chapter 2, Section 3]{Smi96}. The correlation rate in \eqref{eq:intro-correlation} for $n=2$ is contained in Dinh and Sibony \cite[Theorem 6.1]{DS06}. Dinh, Nguy\^en and Sibony \cite[Sections 2 and 3]{DNS07} establish equilibrium traces, an equivalent centered Sobolev norm, and a spectral gap on $W^{1,2}$. Theorems~\ref{thm:main} and \ref{thm:spectrum-intro} identify the optimal norm at every iterate, the full and Fredholm essential spectra, and the multiplicity of each interior eigenvalue. These statements concern the full Sobolev space. Its trace can have a nontrivial kernel, as explained in Section~\ref{sec:trace}.

\subsection{Examples and the proof of compactness}

Conformal changes of coordinates explain the role of moduli space. The maps $h_a(z)=az$ conjugate $z^2$ to $z^2/a$. As $a\downarrow0$, their equilibrium measures concentrate at the origin, whereas $A_{z^2/a}=A_{z^2}$ by conformal invariance. The following family exhibits degeneration with an unbounded optimal constant.

\begin{theorem}\label{thm:nonuniform-intro}
On $S^2$, consider the quadratic rational maps
\begin{equation}\label{eq:degenerate-family}
 f_t(z)=\frac{z(z-1)}{2(z-1)+t},\qquad 0<t<1.
\end{equation}
With the standard conformal structure, their optimal constants satisfy
\[
 A_{f_t}\longrightarrow\infty\qquad(t\downarrow0).
\]
In particular, no bound for the prefactor in \eqref{eq:intro-correlation} can depend only on the dimension $n$, the maximal dilatation $K$, and the degree $d$.
\end{theorem}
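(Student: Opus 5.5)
Understanding the family $f_t$ comes first. At $t=0$ we get $f_0(z)=z(z-1)/(2(z-1))=z/2$ away from $z=1$, so the degree drops and $f_t$ degenerates as $t\downarrow0$. The plan is to show that a weak limit of $\mu_{f_t}$, after suitable normalization or even without it, keeps an atom of mass strictly between $0$ and $1$. Then we use the zero critical capacity of a point to build test functions with small energy and large centered oscillation.

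\emph{Step 1 (dynamics near the degenerate limit).} We locate the fixed points. The equation $f_t(z)=z$ gives $z=0$ or $z-1=2(z-1)+t$, i.e.\ $z=1-t$, together with $\infty$. We have $f_t'(0)=-1/(t-2)\cdot(\ldots)$, which is approximately $1/2$, so $0$ is attracting for small $t$. Near $\infty$, $f_t(z)\sim z/2$, so $\infty$ is repelling. The Julia set therefore splits into pieces: one near $\infty$, and one concentrating near the hole $z=1$, where $f_t$ blows up a disk of radius of order $t$ around $1$ onto a large region. Concretely, $f_t$ maps a small disk around $1$ of size $\asymp t$ onto the complement of a neighborhood of $0$, and maps the remaining region almost like $z\mapsto z/2$. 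I would verify that $\mu_{f_t}$ converges weakly to $\tfrac12(\delta_1+\delta_\infty)$, or to some measure with atoms at $1$ and $\infty$ of masses in $(0,1)$. The route is to compute preimage masses: by $f^*\mu=2\mu$, each point has one preimage near $1$ and one elsewhere. An alternative is to quote the DeMarco--Faber theorem, with which the paper's strategy already aligns, giving an atomic limit and then identifying the atoms. To stay self-contained, I would argue via the balance relation. Let $D$ be a small disk around $1$ and $N$ a neighborhood of $\infty$. Most of $J(f_t)$ lies in $D\cup N$, and mass transfers via $\mu(f^{-1}B\cap D)=\tfrac12\mu(B)$. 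Hence any weak limit $\nu$ satisfies $\nu(\{1\})\ge\tfrac12$ up to errors. The point $\infty$ is a repelling fixed point whose basin side carries the remaining mass, so both atoms are positive.

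\emph{Step 2 (capacity cutoffs).} Fix the atom $p=1$ with limit mass $a\in(0,1)$. For $0<\rho<R$, take the logarithmic cutoff $\chi_{\rho,R}$, equal to $1$ on $B(p,\rho)$, to $0$ outside $B(p,R)$, and to $\log(R/|z-p|)/\log(R/\rho)$ in between. It is continuous, lies in $W^{1,2}$, and has $E(\chi)^2\lesssim 1/\log(R/\rho)$. Its centered oscillation is
\[
\norm{\chi-\mu(\chi)}_{L^1(\mu)}\ge 2\mu(B(p,\rho))\,\mu(S^2\setminus B(p,R)),
\]
which follows from $\int|\chi-c|\,d\mu\ge \mu(\chi=1)|1-c|+\mu(\chi=0)|c|$ minimized over $c$. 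Now choose $R$ fixed and small, and choose $\rho=\rho(t)\to0$ slowly enough that $\mu_{f_t}(B(p,\rho))\to a$ while $\mu_{f_t}(S^2\setminus B(p,R))\to 1-a$. The ratio is then $\gtrsim a(1-a)\sqrt{\log(R/\rho)}\to\infty$, so $A_{f_t}\to\infty$. The consequence about $n,K,d$ is immediate, since every $f_t$ is rational with $K=1$, $d=2$, $n=2$, while Theorem~\ref{thm:main} makes $A_f$ the exact prefactor.

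\emph{Main obstacle.} The hard part is Step 1: showing that the atom at $1$ carries mass bounded away from both $0$ and $1$ uniformly in $t$, with enough quantitative control to pick $\rho(t)$ along a diagonal. My first attempt would be to establish that $J(f_t)\subset D_t\cup N$, where $D_t=B(1,Ct)$ and $N=\{|z|>C^{-1}\}$. A subsequence argument then suffices: any weak limit is supported on $\{1,\infty\}$, and the balance relation forces each atom to have mass $\ge\tfrac12\cdot$(mass of the image region), giving masses exactly $\tfrac12$. Since one may pass to subsequences, no uniform rate is needed; a diagonal choice of $\rho$ works for each subsequence.
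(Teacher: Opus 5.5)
Your Step~2 is sound and uses the same mechanism as the paper (Proposition~\ref{prop:atomic-obstruction}). Choosing $\rho(t)\to0$ diagonally is equivalent to the paper's order of limits: fix the cutoff, let $t\downarrow0$, then shrink the cutoff.

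The gap is in Step~1. Both your picture of $J(f_t)$ and the limit measure you aim to prove are wrong.
\begin{itemize}
\item Substituting $z=1+tw$ gives $f_t(z)=w(1+tw)/(2w+1)$. So $B(1,Ct)$ is mapped onto roughly the complement of a small disk about $1/2$, not about $0$; indeed $f_t(1)=0$.
\item Away from $1$, the other preimage of $y$ is close to $2y$. Hence $J(f_t)$ is not contained in $B(1,Ct)$ together with a neighborhood of $\infty$. The repelling fixed point $1-t$ has a preimage near $2$, that point has one near $4$, and so on.
\item Read literally, with $N=\{|z|>C^{-1}\}$, your containment is true but vacuous. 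For small $t$ the disk $\{|z|<1/2\}$ lies in the basin of the attracting fixed point $0$. The containment therefore says nothing about where weak limits live.
\item The true limit is $\nu=\sum_{j\ge0}2^{-j-1}\delta_{2^j}$, with $\nu(\{\infty\})=0$. It is not $\frac12(\delta_1+\delta_\infty)$.
\end{itemize}
Your heuristic lower bound $\nu(\{1\})\ge\frac12$ is correct. The upper bound $\nu(\{1\})<1$, however, was to come from mass at $\infty$, and that mass does not exist. Quoting DeMarco--Faber does not close this either. A single atom of full mass is purely atomic, and that is exactly the case the paper must exclude: by barycenter normalization in general, and by explicit computation for this family.

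The missing idea is to take the limit in the transfer operator rather than in the Julia set. The equation $f_t(z)=y$ is $z(z-1)-y\bigl(2(z-1)+t\bigr)=0$, whose roots at $t=0$ are $1$ and $2y$. Read homogeneously as $(Z-S)(WZ-2YS)$, this limit is a nonzero quadratic for every $[Y:W]\in\mathbb P^1$. Continuity of unordered roots then gives $\Lop_t\varphi\to Q\varphi$ uniformly on $S^2$, where $Q\varphi(y)=\frac12\varphi(1)+\frac12\varphi(2y)$.

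Stationarity $\mu_t(\Lop_t\varphi)=\mu_t(\varphi)$ passes to any subsequential weak limit. This gives $\nu=\frac12\delta_1+\frac12h_*\nu$ with $h(y)=2y$. Iterating $N$ times leaves a remainder of mass $2^{-N}$, so $\nu$ is the measure above and the whole family converges.

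Even without solving completely, two bump tests suffice. A bump at $1$ gives $\nu(\{1\})\ge\frac12$. A bump at $2$ then gives $\nu$-mass at least $\frac14$ near $2$, so $\frac12\le\nu(\{1\})\le\frac34$. With this input, your cutoff argument at $p=1$ (or at $2$, as in the paper) completes the proof.
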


The inverse-image equation gives a direct calculation of the limiting equilibrium measure. An atom of intermediate mass retains positive centered oscillation on cutoffs whose critical energies tend to zero. This proves the divergence in Theorem~\ref{thm:nonuniform-intro}. For general rational families, conformal barycenter normalization and DeMarco--Faber's theorem provide the boundary measures to which the same argument applies. The trace obstruction itself holds on closed manifolds in every dimension $n\ge2$ for structures with a common ellipticity bound.

Sections~\ref{sec:energy} and \ref{sec:trace} establish the energy identities and the equilibrium trace, including the quantitative and variational descriptions of its bound. Section~\ref{sec:spectrum} proves the exact norm and spectrum results. Section~\ref{sec:statistics} derives H\"older mixing estimates, exactness, mixing of all orders, and a central limit theorem. Section~\ref{sec:families} develops continuity, quasiconformal conjugacy, and the atomic obstruction, then proves Theorem~\ref{thm:moduli-properness}. Section~\ref{sec:examples} gives the examples and proves Theorem~\ref{thm:nonuniform-intro}.

\section{The critical energy and the transfer operator}\label{sec:energy}

The invariant metric makes the critical energy scale exactly under pullback. We establish this identity together with the transfer contraction, using the global Sobolev mapping properties to pass across the branch set. All Sobolev norms use normalized background volume $m$. For complex-valued functions, $|\dd v|_G$ is the Hilbert norm of the complexified covector; the real and imaginary parts are treated together.

\subsection{The invariant metric}

For a quasiregular map $g$, its outer and inner dilatations $K_O(g)$ and $K_I(g)$ are the least constants for which
\[
 \norm{Dg}^n\le K_O(g)J_g,
 \qquad J_g\le K_I(g)\ell(Dg)^n
 \quad\text{almost everywhere},
\]
where $\ell(Dg(x))=\inf_{|v|=1}|Dg(x)v|$ is the least stretch in the background metric. The maximal dilatation is $K(g)=\max\{K_O(g),K_I(g)\}$. A map is uniformly $K$-quasiregular if every iterate is quasiregular and $K(f^k)\le K$ for all $k\ge1$. In particular, each iterate belongs to $W^{1,n}_{\mathrm{loc}}$ and satisfies $\norm{Df^k}^n\le KJ_{f^k}$ almost everywhere.

The invariant structure is a measurable positive definite symmetric endomorphism $G$ of $TM$, with determinant one and bounded ellipticity, such that
\begin{equation}\label{eq:beltrami}
 (Df^k(x))^{\mathsf T}G(f^k(x))Df^k(x)
 =J_{f^k}(x)^{2/n}G(x)
\end{equation}
almost everywhere, for every $k\ge1$. We use the metric $\ip{v}{w}_G=\ip{Gv}{w}$ on vectors and its dual metric on covectors. The matrix defining the dual covector norm is $G^{-1}$. There is $\Lambda\ge1$ such that
\begin{equation}\label{eq:ellipticity}
 \Lambda^{-1}|\xi|\le |\xi|_G\le\Lambda|\xi|
\end{equation}
for covectors almost everywhere. Since $\det G=1$, the induced volume form equals the background volume form.

We use the standard change-of-variables and Sobolev composition properties of quasiregular maps; see \cite{HKM93, Rickman93}. In particular, the branch values have volume zero, and inverse branches off the branch values are quasiconformal. The pushforward of a continuous $W^{1,n}$ function is again continuous and in $W^{1,n}$; the relevant pushforward result is \cite[Lemmas 14.30 and 14.31]{HKM93}, used in \cite[Section 2]{OP14}. This global mapping property justifies the weak derivative across the branch values in the inverse-branch calculation below.

For every $1\le p<\infty$, the Sobolev--Poincar\'e inequality and \eqref{eq:ellipticity} give a finite constant $S_p$ such that
\begin{equation}\label{eq:sobolev-poincare}
 \norm{v-m(v)}_{L^p(m)}\le S_p E(v),\qquad v\in W^{1,n}(M);
\end{equation}
see, for example, \cite[Chapter 2]{Hebey00}. Smooth functions are dense in $W^{1,n}(M)$. If a Sobolev function is continuous, smoothing in a finite atlas gives approximation both uniformly and in $W^{1,n}$.

\subsection{Contraction and a right inverse}

Let $U\varphi=\varphi\circ f$. The averaging in \eqref{eq:intro-transfer} gives $\Lop1=1$, positivity, and contraction on $C(M)$ in the uniform norm. Local indices also give $\Lop^k=(f^k)_*/d^k$ and $\Lop U=I$ on continuous functions.

\begin{lemma}\label{lem:energy}
For every $\varphi\in\Dcal$, the functions $\Lop\varphi$ and $U\varphi$ belong to $\Dcal$, and
\begin{equation}\label{eq:energy-pair}
 E(\Lop\varphi)\le rE(\varphi),
 \qquad E(U\varphi)=r^{-1}E(\varphi),
 \qquad \Lop U\varphi=\varphi.
\end{equation}
Consequently, $E(\Lop^k\varphi)\le r^kE(\varphi)$ and $E(U^k\varphi)=r^{-k}E(\varphi)$ for every $k\ge0$.
\end{lemma}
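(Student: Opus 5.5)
The proof splits into three parts: the identity $\Lop U=I$, the exact scaling of the energy under pullback, and the contraction of $\Lop$.

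\textbf{The right inverse.} The identity $\Lop U\varphi=\varphi$ is pointwise. For every $y$,
\[
\Lop U\varphi(y)=\frac1d\sum_{f(x)=y}i(x,f)\varphi(y)=\varphi(y),
\]
by the index sum $\sum_{f(x)=y}i(x,f)=d$.

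\textbf{Pullback.} Membership $U\varphi\in\Dcal$ will follow from the Sobolev composition properties of quasiregular maps \cite{HKM93,Rickman93}: $\varphi\circ f$ is continuous and lies in $W^{1,n}$, with chain rule $\dd(\varphi\circ f)(x)=\dd\varphi(f(x))\circ Df(x)$ almost everywhere. To compute the $G$-norm of this covector, dualize \eqref{eq:beltrami} at points where $J_f(x)>0$. Writing $\xi=\dd\varphi(f(x))$, the relation $Df^{\mathsf T}G(f(x))Df=J_f^{2/n}G(x)$ inverts to $Df\,G(x)^{-1}Df^{\mathsf T}=J_f^{2/n}G(f(x))^{-1}$. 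This gives the pointwise identity
\[
|\xi\circ Df(x)|_{G(x)}^n=J_f(x)\,|\xi|_{G(f(x))}^n .
\]
Where $J_f=0$ we have $Df=0$ almost everywhere, since $\norm{Df}^n\le KJ_f$, so both sides vanish. Integrating and applying the change-of-variables formula $\int_M h(f(x))J_f(x)\,\dd m=d\int_M h\,\dd m$ yields $E(U\varphi)^n=d\,E(\varphi)^n$, that is, $E(U\varphi)=r^{-1}E(\varphi)$.

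\textbf{Transfer contraction.} Continuity of $\Lop\varphi$ and membership in $W^{1,n}$ come from the pushforward result \cite[Lemmas 14.30, 14.31]{HKM93}, since $\Lop\varphi=d^{-1}f_*\varphi$. That result also lets us compute the weak differential off the branch values $f(B_f)$, a closed set of measure zero, without a singular contribution across it. Off $f(B_f)$, each $y$ has $d$ distinct preimages $x_j=g_j(y)$ with quasiconformal inverse branches $g_j$, and
\[
\dd\Lop\varphi(y)=\frac1d\sum_j \dd\varphi(x_j)\circ Dg_j(y).
\]
Applying the identity above to $g_j$, which also preserves $G$, gives $|\dd\varphi(x_j)\circ Dg_j(y)|_G=J_{g_j}(y)^{1/n}|\dd\varphi(x_j)|_G$. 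Then Hölder's inequality with exponents $n$ and $n/(n-1)$ on the sum of $d$ terms gives
\[
|\dd\Lop\varphi(y)|_G^n\le \frac{d^{n-1}}{d^n}\sum_j J_{g_j}(y)|\dd\varphi(x_j)|_G^n .
\]
Integrating over $y$ and changing variables on each branch, $\sum_j\int J_{g_j}(y)\,h(g_j(y))\,\dd m(y)=\int_M h\,\dd m$ up to null sets. Hence $E(\Lop\varphi)^n\le d^{-1}E(\varphi)^n$. This is the $n$-exponent version of the Cauchy--Schwarz step of \cite[Proposition 4.2]{DNS07}.

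\textbf{Iterates.} The statements for $\Lop^k$ and $U^k$ follow by induction, since both operators preserve $\Dcal$.

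\textbf{Main obstacle.} The main difficulty is justifying the weak derivative of $\Lop\varphi$ across the branch values. A pointwise formula on the complement of a null set does not by itself give a Sobolev function. I would rely on the cited global result that $f_*\varphi\in W^{1,n}$, so that its weak gradient coincides almost everywhere with the branchwise formula. As a fallback, I would approximate $\varphi$ by smooth functions and use lower semicontinuity of $E$ together with uniform convergence of $\Lop\varphi_j$.
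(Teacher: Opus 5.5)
Your proposal is correct and follows the paper's proof essentially step by step. The shared steps are: inverse branches off the branch values, the dualized form of \eqref{eq:beltrami}, Jensen's inequality (your H\"older step is equivalent) on the branch average, branchwise change of variables, the pushforward lemma of \cite{HKM93} to identify the weak derivative across the branch values, the degree formula for the pullback, and the index sum for $\Lop U=I$.

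Two small remarks. The global branchwise change of variables needs the countable, disjointified cover by evenly covered sets that the paper spells out. Your smooth-approximation fallback would not by itself avoid the branch-value issue, since $\Lop\varphi_j$ for smooth $\varphi_j$ is still not smooth across $f(B_f)$.
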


\begin{proof}
Away from the branch values, choose an evenly covered neighborhood with inverse branches $g_1,\ldots,g_d$. Inverting \eqref{eq:beltrami} shows that
\[
 (Dg_j(y))^{\mathsf T}G(g_j(y))Dg_j(y)
 =J_{g_j}(y)^{2/n}G(y).
\]
The dual identity and the Sobolev chain rule therefore give
\[
 |\dd(\varphi\circ g_j)|_G(y)
 =|\dd\varphi|_G(g_j(y))J_{g_j}(y)^{1/n}
\]
almost everywhere. Jensen's inequality yields
\begin{equation}\label{eq:pointwise-transfer}
 |\dd\Lop\varphi|_G^n(y)
 \le\frac1d\sum_{j=1}^d
 |\dd\varphi|_G^n(g_j(y))J_{g_j}(y).
\end{equation}
To integrate this local identity, take a countable cover by evenly covered neighborhoods and disjointify it into measurable subsets. Apply the inverse-branch change of variables on each subset and sum. Each preimage is counted once, apart from a volume-null set. Thus
\[
 \int_M|\dd\Lop\varphi|_G^n\,\dd m
 \le\frac1d\int_M|\dd\varphi|_G^n\,\dd m.
\]
The global Sobolev property recalled above justifies interpreting the derivative in this estimate as the weak derivative on all of $M$.

For the pullback, \eqref{eq:beltrami} gives
\[
 |\dd(\varphi\circ f)|_G^n
 =(|\dd\varphi|_G^n\circ f)J_f
\]
almost everywhere. The degree formula then gives $E(U\varphi)^n=dE(\varphi)^n$. The identity $\Lop U=I$ follows because the sum of the local indices in each fiber is $d$. Iteration proves the final assertions.
\end{proof}

The Jacobian also controls the constant part of a transferred function. Write
\begin{equation}\label{eq:rho}
 \rho_f=\frac{J_f}{d}-1,
 \qquad \eta_f(v)=\int_Mv\rho_f\,\dd m.
\end{equation}
The degree formula gives $m(\rho_f)=0$, and change of variables gives
\begin{equation}\label{eq:mean-transfer}
 m(\Lop v)=m(v)+\eta_f(v),\qquad v\in\Dcal.
\end{equation}
Define the dual critical Sobolev norm of this discrepancy by
\begin{equation}\label{eq:def-b}
 b_f=\sup_{\substack{v\in W^{1,n}(M)\\E(v)\le1}}|\eta_f(v)|.
\end{equation}
Higher integrability of quasiregular mappings gives $J_f\in L^q(m)$ for some $q>1$, as recorded in the proof of \cite[Proposition 3.1]{OP14}. For every such $q$, with $q'=q/(q-1)$,
\begin{equation}\label{eq:dual-bound}
 |\eta_f(v)|
 \le\norm{\rho_f}_{L^q(m)}\norm{v-m(v)}_{L^{q'}(m)}
 \le S_{q'}\norm{\rho_f}_{L^q(m)}E(v).
\end{equation}
In particular, $b_f\le S_{q'}\norm{\rho_f}_{L^q(m)}<\infty$. The energy contraction and this control of the mean give the Sobolev extension of the transfer operator.

\begin{proposition}\label{prop:L-sobolev}
There is a unique bounded extension of $\Lop$ from smooth functions to $W^{1,n}(M)$. It agrees with \eqref{eq:intro-transfer} on $\Dcal$, and
\begin{equation}\label{eq:W-bound}
 E(\Lop v)\le rE(v),\qquad
 |m(\Lop v)|\le |m(v)|+b_fE(v).
\end{equation}
\end{proposition}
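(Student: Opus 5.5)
The plan is to work with the norm $\norm{v}_*=|m(v)|+E(v)$ on $W^{1,n}(M)$. By the Sobolev--Poincar\'e inequality \eqref{eq:sobolev-poincare} with $p=n$, together with \eqref{eq:ellipticity}, this norm is equivalent to the standard $W^{1,n}$ norm. Bounded extension and uniqueness therefore reduce to two tasks: proving the two estimates \eqref{eq:W-bound} on a dense class, and checking that the resulting extension is compatible with \eqref{eq:intro-transfer} on $\Dcal$.

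\emph{The estimates on $\Dcal$.} Smooth functions lie in $\Dcal$. For $v\in\Dcal$, Lemma~\ref{lem:energy} gives $\Lop v\in\Dcal$ and $E(\Lop v)\le rE(v)$. Formula \eqref{eq:mean-transfer} gives $m(\Lop v)=m(v)+\eta_f(v)$. Since $v\in W^{1,n}$, the definition \eqref{eq:def-b} applied to $v/E(v)$ (when $E(v)>0$, and trivially when $v$ is constant) yields $|\eta_f(v)|\le b_fE(v)$. Hence
\[
 \norm{\Lop v}_*\le |m(v)|+(b_f+r)E(v)\le(1+b_f+r)\norm{v}_*
\]
for all $v\in\Dcal$, and in particular for smooth $v$. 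Here $b_f<\infty$ by \eqref{eq:dual-bound}.

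\emph{Extension and uniqueness.} Smooth functions are dense in $W^{1,n}(M)$, so $\Lop$ restricted to smooth functions extends uniquely to a bounded operator on $W^{1,n}(M)$. Both $E$ and $m$ are continuous in the $W^{1,n}$ norm, and $\eta_f$ is continuous by \eqref{eq:dual-bound}, so the inequalities \eqref{eq:W-bound} pass to the limit. Alternatively, one can note that $m(\Lop v)=m(v)+\eta_f(v)$ persists, which gives the second bound directly.

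\emph{Agreement on $\Dcal$.} This is the only real point. Given $\varphi\in\Dcal$, I will use the remark after \eqref{eq:sobolev-poincare} to choose smooth $\varphi_j\to\varphi$ both uniformly and in $W^{1,n}$. Then:
\begin{itemize}
\item $\Lop\varphi_j\to\Lop\varphi$ uniformly, because the pointwise formula \eqref{eq:intro-transfer} is a positive averaging operator that contracts the sup norm.
\item By boundedness of the extension, $\Lop\varphi_j$ converges in $W^{1,n}$ to the extended value $\tilde\Lop\varphi$.
\end{itemize}
Uniform convergence implies $L^n(m)$ convergence, and $W^{1,n}$ convergence implies $L^n$ convergence. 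The two limits therefore coincide $m$-a.e., so $\tilde\Lop\varphi=\Lop\varphi$ as Sobolev classes.

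The step I expect to need the most care is this last identification. The rest is formal once Lemma~\ref{lem:energy}, \eqref{eq:mean-transfer} and the finiteness of $b_f$ are in hand. The key input is the simultaneous uniform and $W^{1,n}$ approximation of continuous Sobolev functions, which guarantees that the pointwise transfer operator and the density extension define the same object on $\Dcal$.
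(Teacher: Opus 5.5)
Your proposal is correct and follows essentially the same route as the paper. Both arguments use the equivalent norm $|m(v)|+E(v)$, obtain boundedness on smooth functions from Lemma~\ref{lem:energy} and \eqref{eq:mean-transfer}--\eqref{eq:dual-bound}, extend by completion, and identify the extension with the pointwise transfer operator on $\Dcal$ through smooth approximations that converge both uniformly and in $W^{1,n}$.
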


\begin{proof}
The expression $|m(v)|+E(v)$ is an equivalent norm on $W^{1,n}(M)$. Lemma~\ref{lem:energy} and \eqref{eq:mean-transfer}--\eqref{eq:dual-bound} prove boundedness in this norm on smooth functions. Completion gives the extension and \eqref{eq:W-bound}. For $v\in\Dcal$, choose smooth $v_j\to v$ both uniformly and in $W^{1,n}$. The functions $\Lop v_j$ converge uniformly to the continuous pushforward of $v$, and in $W^{1,n}$ to its Sobolev extension. The two limits agree.
\end{proof}

\section{The equilibrium trace and its constant}\label{sec:trace}

We estimate the normalized pullback limit by telescoping the one-step Jacobian discrepancy. Applying that estimate to absolute values gives the bounded trace into the equilibrium measure and a quantitative bound for $A_f$.

\subsection{A telescoping estimate}

For $k\ge0$, define the probability measure
\[
 m_k=\frac{(f^k)^*m}{d^k};
 \qquad m_k(v)=m(\Lop^kv),\quad v\in C(M).
\]

\begin{lemma}\label{lem:telescoping}
The measures $m_k$ converge weakly to $\mu$. For every $v\in\Dcal$ and $k\ge0$,
\begin{equation}\label{eq:potential-estimate}
 |\mu(v)-m_k(v)|\le\frac{b_f}{1-r}r^kE(v).
\end{equation}
In particular, with $P_f=b_f/(1-r)$,
\begin{equation}\label{eq:potential-zero}
 |\mu(v)-m(v)|\le P_fE(v).
\end{equation}
\end{lemma}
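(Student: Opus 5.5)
The plan is to telescope the one-step identity \eqref{eq:mean-transfer} along the iterates $\Lop^j v$. For $v\in\Dcal$, Lemma~\ref{lem:energy} puts every $\Lop^jv$ in $\Dcal$. Applying \eqref{eq:mean-transfer} to $\Lop^jv$ gives
\[
 m_{j+1}(v)-m_j(v)=m(\Lop^{j+1}v)-m(\Lop^jv)=\eta_f(\Lop^jv).
\]
The definition \eqref{eq:def-b} of $b_f$, together with homogeneity, gives $|\eta_f(w)|\le b_fE(w)$ for every $w\in W^{1,n}(M)$. Combining this with the energy contraction $E(\Lop^jv)\le r^jE(v)$, we obtain
\[
 |m_{j+1}(v)-m_j(v)|\le b_f r^jE(v).
\]
Since $r=d^{-1/n}<1$, the sequence $(m_k(v))_k$ is Cauchy. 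Writing $\lambda(v)$ for its limit and summing the geometric tail from $j=k$ gives
\[
 |\lambda(v)-m_k(v)|\le \frac{b_f}{1-r}\,r^kE(v).
\]
The next step is to identify $\lambda(v)$ with $\mu(v)$ and to obtain weak convergence $m_k\to\mu$. The $m_k$ are probability measures, because $\Lop$ is positive, $\Lop1=1$, and $m$ is a probability measure. Smooth functions lie in $\Dcal$ and are uniformly dense in $C(M)$. So $\lambda$ extends to a positive normalized functional on $C(M)$, i.e.\ a probability measure $\lambda$, and $m_k\to\lambda$ weakly. The standard $\varepsilon/3$ argument works here because all the $m_k$ have total mass one.

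It then remains to show $\lambda=\mu$. This is the main obstacle, since it requires some characterization of $\mu$ from \cite{OP14}. The first thing I would try is to quote Okuyama--Pankka directly: their construction gives $\mu$ as the weak limit of $d^{-k}(f^k)^*m$ for volume measure, or at least for a generic smooth measure. If only the latter version is available, I would instead characterize $\mu$ as the unique balanced measure, $f^*\mu=d\mu$, that puts no mass on the exceptional set. The balance relation $f^*\lambda=d\lambda$ follows by passing $m_{k+1}(v)=m_k(\Lop v)$ to the limit, since $\Lop v$ is continuous. Absence of mass on the exceptional set would use its finiteness together with non-atomicity: a small cutoff around a point has small $n$-energy, and then the estimate above gives small $\lambda$-mass.

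Once $\lambda=\mu$, the displayed tail bound is exactly \eqref{eq:potential-estimate}. The case $k=0$, where $m_0=m$, gives \eqref{eq:potential-zero} with $P_f=b_f/(1-r)$.
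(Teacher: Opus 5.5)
Your proposal is correct and matches the paper's proof. Both telescope \eqref{eq:mean-transfer} along $\Lop^jv$, bound each increment by $b_fr^jE(v)$ using the energy contraction, and extend the limit to $C(M)$ by uniform density, using that every $m_k$ is a probability measure. Both then identify the limit with $\mu$ as the normalized pullback limit of volume from \cite{OP14}, which is your first option; your balanced-measure fallback is not needed.
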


\begin{proof}
Equations \eqref{eq:mean-transfer} and \eqref{eq:energy-pair} give
\[
 m_{j+1}(v)-m_j(v)=\eta_f(\Lop^jv),
 \qquad |m_{j+1}(v)-m_j(v)|\le b_fr^jE(v).
\]
Thus $m_k(v)$ is Cauchy for every $v\in\Dcal$. Smooth functions are uniformly dense in $C(M)$, and all $m_k$ are probability measures. It follows by uniform approximation that $m_k(v)$ converges for every continuous $v$, to a positive functional of mass one. The corresponding probability measure is the normalized pullback limit, hence the equilibrium measure of \cite{OP14}. Summing the geometric tail proves \eqref{eq:potential-estimate}.
\end{proof}

The construction also gives $\mu(\Lop v)=\mu(v)$ for continuous $v$. Combining this with $\Lop U=I$ gives $\mu(Uv)=\mu(v)$. Moreover, $\Lop((\psi\circ f)\varphi)=\psi\Lop\varphi$ for continuous functions. Consequently,
\begin{equation}\label{eq:duality}
 \int_M(\psi\circ f)\varphi\,\dd\mu
 =\int_M\psi\Lop\varphi\,\dd\mu.
\end{equation}
The balance relation gives the same identities for bounded measurable functions. Positivity and Jensen's inequality extend $\Lop$ to a contraction on $L^p(\mu)$, $1\le p\le\infty$. The map $U$ is an isometry on each of these spaces, and $\Lop=U^*$ on $L^2(\mu)$.

\subsection{Restriction to the equilibrium measure}

The next argument uses the potential estimate on an absolute value. This is the step that converts weak control of the measure into an $L^1$ trace inequality.

\begin{lemma}\label{lem:trace}
For every $v\in\Dcal$,
\begin{align}
 \norm{v-m(v)}_{L^1(\mu)}&\le (S_1+P_f)E(v),\label{eq:uncentered-trace}\\
 \norm{v-\mu(v)}_{L^1(\mu)}&\le 2(S_1+P_f)E(v).\label{eq:centered-trace}
\end{align}
\end{lemma}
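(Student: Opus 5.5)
Apply the potential estimate \eqref{eq:potential-zero} of Lemma~\ref{lem:telescoping} to the absolute value $w=|v-m(v)|$ rather than to $v$ itself. For $v\in\Dcal$, the function $w$ is continuous. It lies in $W^{1,n}(M)$ with $|\dd w|_G=|\dd v|_G$ almost everywhere, by the standard lattice property of Sobolev functions; in the complex case, $|\dd|u||\le|\dd u|$ pointwise. Hence $w\in\Dcal$ and $E(w)\le E(v)$. Since $w\ge0$, we have $\mu(w)=\norm{v-m(v)}_{L^1(\mu)}$. Lemma~\ref{lem:telescoping} then gives
\[
 \norm{v-m(v)}_{L^1(\mu)}=\mu(w)\le m(w)+P_fE(w).
\]
Next, $m(w)=\norm{v-m(v)}_{L^1(m)}\le S_1E(v)$ by the Sobolev--Poincar\'e inequality \eqref{eq:sobolev-poincare} with $p=1$. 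Combined with $E(w)\le E(v)$, this proves \eqref{eq:uncentered-trace}.

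For the centered bound \eqref{eq:centered-trace}, write $v-\mu(v)=(v-m(v))-\mu(v-m(v))$. The triangle inequality and $|\mu(v-m(v))|\le\norm{v-m(v)}_{L^1(\mu)}$ (since $\mu$ is a probability measure) give
\[
 \norm{v-\mu(v)}_{L^1(\mu)}\le 2\norm{v-m(v)}_{L^1(\mu)}.
\]
Inserting \eqref{eq:uncentered-trace} finishes the proof. Alternatively one could use $|\mu(v)-m(v)|\le P_fE(v)$ directly, which gives the sharper constant $S_1+2P_f$, but the stated factor $2$ is what the triangle argument yields.

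The only step needing care is the claim that $w\in\Dcal$ with $E(w)\le E(v)$, because Lemma~\ref{lem:telescoping} was proved only on $\Dcal$. Continuity of $w$ is clear. For the Sobolev statement I would invoke the chain rule for the Lipschitz function $t\mapsto|t|$ (or $z\mapsto|z|$) composed with a $W^{1,n}$ function. The pointwise inequality $|\dd w|_G\le|\dd v|_G$ then holds almost everywhere, with the $G$-norm used throughout, since it is a pointwise norm on covectors. Everything else is a direct combination of earlier estimates.
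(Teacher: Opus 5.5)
Your proof is correct and matches the paper's argument: apply \eqref{eq:potential-zero} to $w=|v-m(v)|$ using the chain-rule bound $E(w)\le E(v)$, bound $m(w)$ by Sobolev--Poincar\'e with $p=1$, and recenter at $\mu(v)$ at the cost of a factor of two. Your side remark is also correct: if you recenter using $|\mu(v)-m(v)|\le P_fE(v)$ directly, you get the slightly sharper constant $S_1+2P_f$.
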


\begin{proof}
Let $c=m(v)$ and $w=|v-c|$. The Sobolev chain rule gives $w\in\Dcal$ and $E(w)\le E(v)$, including for complex-valued $v$. Applying \eqref{eq:potential-zero} to $w$ gives
\[
 \int_Mw\,\dd\mu
 \le\int_Mw\,\dd m+P_fE(w)
 \le(S_1+P_f)E(v).
\]
This is \eqref{eq:uncentered-trace}. Finally,
\[
 \norm{v-\mu(v)}_{L^1(\mu)}
 \le\norm{v-c}_{L^1(\mu)}+|\mu(v)-c|
 \le2\norm{v-c}_{L^1(\mu)},
\]
and the lemma is proved.
\end{proof}

The centered estimate gives a bound for the optimal constant in terms of the first normalized pullback of volume.

\begin{proposition}\label{prop:prefactor-bound}
With $b_f$ defined by \eqref{eq:def-b},
\begin{equation}\label{eq:main-constant}
 A_f\le C_f:=2\left(S_1+\frac{b_f}{1-r}\right),
 \qquad
 b_f\le S_{q'}\norm{\frac{J_f}{d}-1}_{L^q(m)}
\end{equation}
for every $q>1$ such that $J_f\in L^q(m)$, where $q'=q/(q-1)$. In particular, $A_f<\infty$.
\end{proposition}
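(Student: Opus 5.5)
The bound on $A_f$ follows by combining the centered trace estimate of Lemma~\ref{lem:trace} with the definition \eqref{eq:optimal-trace}, and inserting the value of $P_f$ from Lemma~\ref{lem:telescoping}. Fix $\varphi\in\Dcal$ with $E(\varphi)>0$. The estimate \eqref{eq:centered-trace} gives
\[
 \norm{\varphi-\mu(\varphi)}_{L^1(\mu)}\le 2(S_1+P_f)E(\varphi),
\]
and $P_f=b_f/(1-r)$ by definition. Dividing by $E(\varphi)$ and taking the supremum over $\varphi$ yields $A_f\le 2\bigl(S_1+b_f/(1-r)\bigr)=C_f$. Since $d\ge2$ and $n\ge2$, we have $r=d^{-1/n}<1$, so the factor $1/(1-r)$ is finite.

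The second inequality in \eqref{eq:main-constant} is the estimate \eqref{eq:dual-bound}, taken over the unit ball $E(v)\le1$ of $W^{1,n}(M)$. The step to check is the first inequality there, which uses $m(\rho_f)=0$ to replace $v$ by $v-m(v)$:
\[
 \eta_f(v)=\int_M (v-m(v))\rho_f\,\dd m .
\]
Hölder's inequality with exponents $q,q'$ then bounds this by $\norm{\rho_f}_{L^q(m)}\norm{v-m(v)}_{L^{q'}(m)}$. The Sobolev--Poincaré inequality \eqref{eq:sobolev-poincare} with $p=q'<\infty$ bounds the second factor by $S_{q'}E(v)$. Taking the supremum over $E(v)\le1$ gives $b_f\le S_{q'}\norm{J_f/d-1}_{L^q(m)}$.

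Finiteness of $A_f$ needs some $q>1$ with $J_f\in L^q(m)$, which is the higher integrability of quasiregular mappings cited before \eqref{eq:dual-bound}. For that $q$ the right-hand side is finite, hence $b_f<\infty$, hence $C_f<\infty$ and so $A_f<\infty$. There is no real obstacle here. The only points to verify are that the preceding lemmas were stated for complex-valued $v$, which Lemma~\ref{lem:trace} covers, and that the supremum defining $A_f$ runs over the same class $\Dcal$ on which Lemma~\ref{lem:trace} holds.
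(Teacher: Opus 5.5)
Your proposal is correct and matches the paper's proof: take the supremum in the centered trace estimate \eqref{eq:centered-trace} with $P_f=b_f/(1-r)$. The bound on $b_f$ then comes from \eqref{eq:dual-bound}, which you re-derive from $m(\rho_f)=0$, H\"older's inequality, and Sobolev--Poincar\'e, just as the paper does.
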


\begin{proof}
Take the supremum in \eqref{eq:centered-trace} and use $P_f=b_f/(1-r)$. The second inequality follows from \eqref{eq:dual-bound}.
\end{proof}

The same estimates extend restriction from continuous observables to Sobolev equivalence classes.

\begin{proposition}\label{prop:trace-extension}
Restriction on smooth functions has a unique bounded extension
\[
 \Tr\colon W^{1,n}(M)\longrightarrow L^1(\mu).
\]
It agrees with restriction on $\Dcal$. With $\mu(v)=\int_M\Tr v\,\dd\mu$, the estimates \eqref{eq:potential-zero} and \eqref{eq:centered-trace} extend to all $W^{1,n}(M)$. Furthermore,
\begin{equation}\label{eq:trace-intertwine}
 \Tr(\Lop v)=\Lop(\Tr v).
\end{equation}
\end{proposition}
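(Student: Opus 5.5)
We build $\Tr$ by density and the uncentered trace bound. For smooth $v$, Lemma~\ref{lem:trace} gives
\[
\norm{v}_{L^1(\mu)}\le |m(v)|+\norm{v-m(v)}_{L^1(\mu)}\le |m(v)|+(S_1+P_f)E(v).
\]
The right-hand side is bounded by a multiple of the equivalent norm $|m(v)|+E(v)$ on $W^{1,n}(M)$, as in the proof of Proposition~\ref{prop:L-sobolev}. Hence restriction is a bounded linear map from smooth functions, with this norm, into $L^1(\mu)$. Smooth functions are dense in $W^{1,n}(M)$, so restriction has a unique bounded extension $\Tr$.

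To see that $\Tr$ agrees with restriction on $\Dcal$, take $v\in\Dcal$. As recalled in Section~\ref{sec:energy}, there are smooth $v_j\to v$ both uniformly and in $W^{1,n}$. Uniform convergence gives $v_j|_{\mu}\to v$ in $L^1(\mu)$, because $\mu$ is a probability measure. Convergence in $W^{1,n}$ gives $v_j|_\mu\to\Tr v$ in $L^1(\mu)$. The two limits coincide.

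The estimates then pass to all of $W^{1,n}(M)$ by continuity. Each of the quantities $\mu(v)=\int\Tr v\,\dd\mu$, $m(v)$, $E(v)$ and $\norm{\Tr v-\mu(v)}_{L^1(\mu)}$ is continuous in the $W^{1,n}$ norm. The first and last are continuous because $\Tr$ is bounded into $L^1(\mu)$. The middle two are continuous because $m(v)$ is controlled by the $L^1(m)$ norm. Therefore \eqref{eq:potential-zero} and \eqref{eq:centered-trace}, which hold on smooth functions, extend to all $v$ by approximation.

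For the intertwining relation \eqref{eq:trace-intertwine}, we use that $\Lop$ acts on $L^1(\mu)$ as a contraction. This follows from positivity and Jensen's inequality, as noted after \eqref{eq:duality}. Let $v$ be smooth. Then $v\in\Dcal$, and by Lemma~\ref{lem:energy} and Proposition~\ref{prop:L-sobolev}, $\Lop v\in\Dcal$ is the continuous pushforward. By the agreement just proved, $\Tr(\Lop v)=(\Lop v)|_\mu=\Lop(v|_\mu)=\Lop(\Tr v)$. Both sides are bounded maps $W^{1,n}(M)\to L^1(\mu)$. On the left this is the composition of the bounded extension of $\Lop$ with $\Tr$; on the right it is $\Tr$ followed by the $L^1(\mu)$ contraction. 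Two bounded maps that agree on a dense set agree everywhere.

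The main subtlety is the middle identity above: the transfer operator on $L^1(\mu)$, defined abstractly by extension from continuous or bounded functions, must coincide with the pointwise formula \eqref{eq:intro-transfer} applied to the continuous function $v$. This is so because the $L^1(\mu)$ extension is taken from $C(M)$, and $C(M)$ is dense in $L^1(\mu)$. So on continuous functions the two operators agree by construction, and no branch-set issue arises because we never evaluate on $\mu$-null sets beyond continuity.
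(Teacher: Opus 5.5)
Your proof is correct and follows the paper's own argument: the uncentered bound \eqref{eq:uncentered-trace} gives the density extension, simultaneous uniform and $W^{1,n}$ smoothing gives agreement on $\Dcal$, and the intertwining follows because both sides are bounded maps into $L^1(\mu)$ that agree on smooth functions (the paper writes this with an approximating sequence, which is the same thing). One cosmetic fix: continuity of $E$ in the $W^{1,n}$ topology comes from $E$ being a seminorm dominated by the gradient seminorm via \eqref{eq:ellipticity}, not from the $L^1(m)$ control you cite, which only covers $m(v)$.
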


\begin{proof}
By \eqref{eq:uncentered-trace},
\[
 \norm{v}_{L^1(\mu)}\le |m(v)|+(S_1+P_f)E(v),\qquad v\in\Dcal.
\]
If smooth $v_j$ converge in $W^{1,n}$, their restrictions are therefore Cauchy in $L^1(\mu)$. This defines the bounded trace and proves its independence of the approximation. Uniform Sobolev smoothing shows agreement on $\Dcal$. All the asserted inequalities pass to the limit.

For smooth $v_j\to v$ in $W^{1,n}$, Proposition~\ref{prop:L-sobolev} gives $\Lop v_j\to\Lop v$ in $W^{1,n}$. Each $\Lop v_j$ is continuous, so its trace is its restriction. The left side of \eqref{eq:trace-intertwine} is the $L^1(\mu)$ limit of these restrictions. Since $\Lop$ is an $L^1(\mu)$ contraction, the same sequence converges to its right side.
\end{proof}

The trace assigns an $L^1(\mu)$ class to each Sobolev class by completion from smooth functions. It can have a nontrivial kernel: for example, a smooth function supported away from $\supp\mu$ has zero trace.

The seminorm $E$ is a complete norm on $\Hcal_0$. Indeed, \eqref{eq:potential-zero} shows that
\begin{equation}\label{eq:norm-equivalence}
 \norm{v}_{\mu,E}:=|\mu(v)|+E(v)
\end{equation}
is equivalent to $|m(v)|+E(v)$ on $W^{1,n}(M)$. The subspace $\Hcal_0$ is closed in that space. The same density argument shows that $A_f$ is the optimal centered trace constant on all of $W^{1,n}(M)$, with the trace interpretation.

\begin{remark}\label{rem:density}
If $\mu=w m$ with $w\in L^p(m)$ for some $p>1$, then
\begin{equation}\label{eq:density-bound}
 A_f\le2S_{p'}\norm{w}_{L^p(m)},\qquad p'=p/(p-1).
\end{equation}
Indeed, H\"older's inequality bounds $\norm{v-m(v)}_{L^1(\mu)}$ by $S_{p'}\norm{w}_pE(v)$, and centering at $\mu(v)$ costs at most a factor of two.

For a manifold that is not a rational homology sphere, Kangasniemi, Okuyama, Pankka and Sahlsten \cite{KOPS21} proved that the topological entropy is $\log d$, and Kangasniemi \cite{Kan21} proved absolute continuity of $\mu$. The proof of \cite[Theorem 1.2]{Kan21} also gives the required higher integrability as follows. Choose $0<k<n$ with nonzero cohomology. The proof constructs a normalized eigenform $\omega$; Proposition 4.6 there gives $\omega\in L^s$ for some $s>n/k$, and Lemma 6.1 identifies
\[
 \mu=|\omega|_f^{n/k}\vol_M.
\]
Here $|\cdot|_f$ is the form norm for the invariant structure used in \cite{Kan21}, equivalent to the background norm. Thus the density relative to $m$ lies in $L^{sk/n}(m)$, with $sk/n>1$. The exponent and density norm can depend on the map. A common exponent and density-norm bound give a uniform trace bound on a family through \eqref{eq:density-bound}.
\end{remark}

\subsection{A variational description}

The constant $b_f$ can also be expressed through a nonlinear Poisson equation. The associated nonlinear flux minimizes the dual norm among all fluxes with the prescribed divergence.

Let $\nabla_Gv$ be the vector field dual to $\dd v$ in the metric $G$, and put $n'=n/(n-1)$. We work with real-valued functions in the following proposition. The value of $b_f$ is unchanged: for a complex test function, rotation by a constant phase and taking the real part recovers the absolute value of $\eta_f(v)$ without increasing its energy.

\begin{proposition}\label{prop:variational}
There is a unique real $u\in W^{1,n}(M)$ with $m(u)=0$ such that
\begin{equation}\label{eq:weak-poisson}
 \int_M|\nabla_Gu|_G^{n-2}
       \ip{\nabla_Gu}{\nabla_Gv}_G\,\dd m
 =\int_M\rho_fv\,\dd m,
 \qquad v\in W^{1,n}(M;\R).
\end{equation}
It satisfies
\begin{equation}\label{eq:dual-exact}
 E(u)^{n-1}=b_f
 =\min_X\norm{X}_{L^{n'}(m;G)},
\end{equation}
where the minimum is over real vector fields $X\in L^{n'}(m;TM)$ satisfying
\begin{equation}\label{eq:flux-constraint}
 \int_M\ip{X}{\nabla_Gv}_G\,\dd m=\int_M\rho_fv\,\dd m
 \quad\text{for all }v\in W^{1,n}(M;\R).
\end{equation}
The minimizing field is $X=|\nabla_Gu|_G^{n-2}\nabla_Gu$.
\end{proposition}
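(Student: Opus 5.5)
The plan is to use the direct method of the calculus of variations for the strictly convex functional
\[
 \Phi(v)=\frac1n\int_M|\nabla_Gv|_G^n\,\dd m-\eta_f(v)
\]
on the closed subspace $V=\{v\in W^{1,n}(M;\R):m(v)=0\}$. The argument has four steps: existence of a minimizer, its Euler--Lagrange equation, the identity $E(u)^{n-1}=b_f$, and the flux duality.

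\emph{Existence and uniqueness.} On $V$, the energy $E$ is an equivalent norm by \eqref{eq:sobolev-poincare}, and $V$ is reflexive. By \eqref{eq:dual-bound} we have $|\eta_f(v)|\le b_fE(v)$, so $\Phi(v)\ge E(v)^n/n-b_fE(v)$, which is coercive. The map $v\mapsto\int|\nabla_Gv|_G^n$ is convex and strongly continuous, hence weakly lower semicontinuous. The functional $\eta_f$ is continuous and linear, hence weakly continuous. Therefore a minimizer $u\in V$ exists.

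Strict convexity of $\xi\mapsto|\xi|_G^n$ (pointwise, for $n\ge2$) makes $\Phi$ strictly convex modulo constants, and on $V$ constants are excluded. This gives uniqueness of the minimizer. The Euler--Lagrange equation, obtained by differentiating $\Phi(u+tv)$ at $t=0$ with dominated convergence, is \eqref{eq:weak-poisson} for $v\in V$. It extends to all real $v\in W^{1,n}$ because both sides vanish on constants: the left side trivially, the right side since $m(\rho_f)=0$. Any solution of \eqref{eq:weak-poisson} with mean zero is a critical point, hence a minimizer, of the convex functional $\Phi$. This gives uniqueness among solutions as well.

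\emph{Identity $E(u)^{n-1}=b_f$.} Testing \eqref{eq:weak-poisson} with $v=u$ gives $\eta_f(u)=E(u)^n$. Hölder's inequality with exponents $n'$ and $n$ gives, for every $v$,
\[
 |\eta_f(v)|\le\Bigl(\int|\nabla_Gu|_G^{n}\Bigr)^{1/n'}E(v)=E(u)^{n-1}E(v),
\]
so $b_f\le E(u)^{n-1}$. If $u\neq0$, taking $v=u/E(u)$ attains this bound, so $b_f=E(u)^{n-1}$. If $u=0$, then $\eta_f\equiv0$ and both sides vanish. By the remark preceding the proposition, restricting to real test functions does not change $b_f$.

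\emph{Flux duality.} The field $X_0=|\nabla_Gu|_G^{n-2}\nabla_Gu$ lies in $L^{n'}$ with $\norm{X_0}_{L^{n'}(m;G)}=E(u)^{n-1}$. It satisfies \eqref{eq:flux-constraint} by \eqref{eq:weak-poisson}, so the infimum over admissible fields is at most $b_f$. Conversely, for any admissible $X$ and any $v$ with $E(v)\le1$, Hölder's inequality gives $|\eta_f(v)|=|\int\ip{X}{\nabla_Gv}_G|\le\norm{X}_{n'}$, hence $b_f\le\norm{X}_{n'}$. So the minimum equals $b_f$ and is attained at $X_0$.

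The main obstacle I expect is a bookkeeping point rather than a conceptual one. One must confirm that $\ip{\cdot}{\cdot}_G$ on vectors matches the dual norm on covectors, so that $|\nabla_Gv|_G=|\dd v|_G$ and $\int|\nabla_Gv|^n_G\,\dd m=E(v)^n$. One must also check that the Hölder pairing uses these same pointwise norms. Both follow from duality of $G$ and $G^{-1}$, since the volume form is unchanged because $\det G=1$.
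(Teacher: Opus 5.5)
Your proposal is correct and follows essentially the same route as the paper. Both minimize $\frac1nE(v)^n-\eta_f(v)$ on the mean-zero subspace, obtain \eqref{eq:weak-poisson} as the Euler equation, combine the test $v=u$ with H\"older's inequality to get $E(u)^{n-1}=b_f$, and check that the nonlinear flux is admissible and optimal; you also make explicit two points the paper leaves implicit, namely extending the Euler equation to non-mean-zero test functions via $m(\rho_f)=0$, and uniqueness among all mean-zero solutions (not only minimizers) via convexity.
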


\begin{proof}
On the mean-zero Sobolev space, minimize
\[
 \mathcal F(v)=\frac1n E(v)^n-\eta_f(v).
\]
The estimate $|\eta_f(v)|\le b_fE(v)$ gives coercivity. Reflexivity, weak lower semicontinuity of the energy, and weak continuity of $\eta_f$ give a minimizer. Strict convexity in the gradient gives uniqueness after fixing the mean. The Euler equation is \eqref{eq:weak-poisson}.

Testing with $v=u$ gives $E(u)^n=\eta_f(u)\le b_fE(u)$. On the other hand, H\"older's inequality in \eqref{eq:weak-poisson} gives
\[
 |\eta_f(v)|\le E(u)^{n-1}E(v).
\]
These two inequalities prove $E(u)^{n-1}=b_f$, also when both vanish. Every field satisfying \eqref{eq:flux-constraint} has $b_f\le\norm{X}_{n',G}$. The displayed nonlinear flux has norm $E(u)^{n-1}$ and is admissible by \eqref{eq:weak-poisson}. It attains the minimum.
\end{proof}

The variational formula identifies $b_f$ intrinsically. For estimating the trace constant, the direct bound \eqref{eq:dual-bound} suffices.

\section{The spectrum and the exact mixing norm}\label{sec:spectrum}

We use the right inverse in Lemma~\ref{lem:energy} to determine the transfer norm and construct eigenvectors. The energy scaling fixes the norm of each iterate, and the transfer kernel generates infinite-dimensional eigenspaces throughout the open spectral disk. We first extend pullback to the completed Sobolev space.

\begin{lemma}\label{lem:U-extension}
The pullback $U$ extends boundedly to $W^{1,n}(M)$ with the norm \eqref{eq:norm-equivalence}. On this space,
\begin{equation}\label{eq:U-extension}
 E(Uv)=r^{-1}E(v),\quad \mu(Uv)=\mu(v),\quad
 \Lop Uv=v,\quad \Tr(Uv)=U(\Tr v).
\end{equation}
Both $U$ and $\Lop$ preserve $\Hcal_0$.
\end{lemma}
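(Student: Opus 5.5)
We extend $U$ by density from smooth functions, using the identities of Lemma~\ref{lem:energy} on $\Dcal$ and the trace estimates of Section~\ref{sec:trace}. For smooth $v$, $Uv=v\circ f$ lies in $\Dcal$. By Lemma~\ref{lem:energy} we have $E(Uv)=r^{-1}E(v)$. Invariance gives $\mu(Uv)=\mu(v)$, which we read off from $\mu(\Lop w)=\mu(w)$ and $\Lop U=I$ as recorded after Lemma~\ref{lem:telescoping}. Hence
\[
 \norm{Uv}_{\mu,E}=|\mu(v)|+r^{-1}E(v)\le r^{-1}\norm{v}_{\mu,E}
\]
on smooth functions. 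By the equivalence of \eqref{eq:norm-equivalence} with the standard norm and density of smooth functions, $U$ has a unique bounded extension to $W^{1,n}(M)$. Both $E$ and $\mu(\cdot)=\int\Tr(\cdot)\,\dd\mu$ are continuous for this norm, the latter by Proposition~\ref{prop:trace-extension}. So the first two identities in \eqref{eq:U-extension} pass to the limit.

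For $\Lop Uv=v$, we take smooth $v_j\to v$. Then $Uv_j\to Uv$ in $W^{1,n}$ by the boundedness just proved, and $\Lop$ is bounded on $W^{1,n}$ by Proposition~\ref{prop:L-sobolev}. The identity $\Lop Uv_j=v_j$ holds for continuous functions and agrees with the extension on $\Dcal$, so $\Lop Uv=\lim\Lop Uv_j=\lim v_j=v$.

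The trace intertwining is the step requiring the most care, and it mirrors the proof of \eqref{eq:trace-intertwine}. Each $Uv_j$ is continuous, so its trace is its restriction $(v_j\circ f)|_{\supp\mu}$, which equals $U(\Tr v_j)$ as an element of $L^1(\mu)$. The left side $\Tr(Uv_j)$ converges in $L^1(\mu)$ to $\Tr(Uv)$ by boundedness of $\Tr$ and of the extended $U$. On the right side, $\Tr v_j\to\Tr v$ in $L^1(\mu)$, and $U$ is an isometry of $L^1(\mu)$ by invariance of $\mu$. The only point to check is that composition with $f$ is well defined on $\mu$-classes, which follows from $f_*\mu=\mu$, so null sets pull back to null sets. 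Hence $U(\Tr v_j)\to U(\Tr v)$, and the limits agree.

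Finally, preservation of $\Hcal_0$ follows at once. If $\mu(v)=0$, then $\mu(Uv)=\mu(v)=0$. For $\Lop$, we use $\mu(\Lop v)=\int\Tr(\Lop v)\,\dd\mu=\int\Lop(\Tr v)\,\dd\mu=\int\Tr v\,\dd\mu=\mu(v)$, by \eqref{eq:trace-intertwine} together with $\mu$-invariance of the transfer operator on $L^1(\mu)$. That invariance comes from \eqref{eq:duality} with $\psi=1$, extended to $L^1(\mu)$ by density.
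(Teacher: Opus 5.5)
Your proposal is correct and follows the paper's argument. You bound $\norm{Uv}_{\mu,E}$ on a dense class using the energy identity and invariance of $\mu$, extend by density, pass the identities to the limit using boundedness of $\Lop$, continuity of $\Tr$ and the $L^1(\mu)$ isometry of $U$, and get $\Lop$-invariance of $\Hcal_0$ from \eqref{eq:trace-intertwine} and stationarity of $\mu$; you only spell out a few details the paper leaves implicit, such as composition with $f$ being well defined on $\mu$-classes.
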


\begin{proof}
For $v\in\Dcal$, the energy identity and invariance of $\mu$ give
\[
 \norm{Uv}_{\mu,E}=|\mu(v)|+r^{-1}E(v).
\]
Density gives the extension and the first three identities. To prove the trace identity, approximate $v$ in $W^{1,n}$ by smooth functions and use the continuity of the trace and the isometry of $U$ on $L^1(\mu)$. Invariance under $\Lop$ follows from \eqref{eq:trace-intertwine} and stationarity of $\mu$.
\end{proof}

We use the Fredholm essential spectrum
\[
 \spec_{\mathrm{ess}}(T)=\{\lambda\in\C:T-\lambda I\text{ is not Fredholm}\},
\]
where a Fredholm operator has closed range, finite-dimensional kernel, and finite-dimensional cokernel.

\begin{proof}[Proof of Theorem~\ref{thm:spectrum-intro}]
On $\Hcal_0$, Lemmas~\ref{lem:energy} and \ref{lem:U-extension} give $\norm{\Lop^k}\le r^k$. If $v\in\Hcal_0$ is nonzero, then
\[
 \frac{E(\Lop^kU^kv)}{E(U^kv)}
 =\frac{E(v)}{r^{-k}E(v)}=r^k.
\]
Thus the operator norm is exactly $r^k$.

The kernel of $\Lop$ in $\Hcal_0$ is infinite-dimensional. To see this, choose an evenly covered open set $V$ off the branch values, and let $W$ be one of its $d$ inverse sheets. Put
\[
 Q=I-U\Lop.
\]
Then $\Lop Q=0$, $QU=0$, and $\mu(Qa)=0$. The map $a\mapsto Qa$ is injective on $C_c^\infty(W)$. Indeed, if $Qa=0$, then $a=U\Lop a$ is constant on each fiber. Every fiber meeting the support of $a$ has a point in another sheet, where $a=0$, so $a=0$ everywhere. Thus the image of this infinite-dimensional space is contained in $\ker(\Lop|_{\Hcal_0})$. Fix any nonzero $w$ in that kernel.

For $\lambda\in\C$ with $|\lambda|<r$, the series
\begin{equation}\label{eq:eigenvector-series}
 v_\lambda=\sum_{j=0}^{\infty}\lambda^jU^jw
\end{equation}
converges in $\Hcal_0$, since
\[
 E(\lambda^jU^jw)=(|\lambda|/r)^jE(w).
\]
The identity $\Lop w=0$ gives $\Lop v_\lambda=\lambda v_\lambda$, whereas $Qv_\lambda=w$. The linear map $w\mapsto v_\lambda$ is therefore injective on $\ker(\Lop|_{\Hcal_0})$, so every $|\lambda|<r$ has an infinite-dimensional eigenspace. The norm bound gives the reverse spectral inclusion in the closed disk, and closedness of the spectrum gives equality. On the open disk, $\Lop-\lambda I$ is not Fredholm because its kernel is infinite-dimensional. The set of Fredholm operators is open in the operator norm, so the boundary circle also belongs to the essential spectrum. Outside the closed disk the operator is invertible. This proves the essential-spectrum assertion.

Finally,
\[
 W^{1,n}(M)=\C\mathbf1\oplus\Hcal_0
\]
is a topological direct sum invariant under $\Lop$. Its action on the first summand is the identity; on the second, the spectrum lies in $|z|\le r<1$. This proves the asserted spectrum and simplicity of the eigenvalue $1$.
\end{proof}

In particular, $I-\Lop$ is invertible on $\Hcal_0$, and
\begin{equation}\label{eq:resolvent}
 (I-\Lop)^{-1}v=\sum_{j=0}^{\infty}\Lop^jv,
 \qquad
 E\big((I-\Lop)^{-1}v\big)\le\frac{E(v)}{1-r}.
\end{equation}
The series converges in the full Sobolev topology.

\begin{proof}[Proof of Theorem~\ref{thm:main}]
Proposition~\ref{prop:trace-extension} supplies the trace, and Proposition~\ref{prop:prefactor-bound} gives $A_f<\infty$. For $A_f>0$, note that if every smooth function were constant $\mu$-almost everywhere, then $\supp\mu$ would be a single point, contradicting non-atomicity of $\mu$. A smooth function that is nonconstant $\mu$-almost everywhere is nonconstant, so it has positive energy and contributes a positive quotient.

For $\varphi\in\Dcal$, stationarity and the energy contraction give
\[
 \norm{\Lop^k\varphi-\mu(\varphi)}_{L^1(\mu)}
 \le A_fE(\Lop^k\varphi)
 \le A_fr^kE(\varphi).
\]
For the reverse inequality, fix $v\in\Dcal$ with $E(v)>0$ and set $\varphi_k=r^kU^kv$. Then
\[
 E(\varphi_k)=E(v),\qquad
 \Lop^k\varphi_k-\mu(\varphi_k)=r^k(v-\mu(v)).
\]
Taking the supremum over $v$ proves \eqref{eq:sharp-mixing}.
\end{proof}

The same upper estimate holds for arbitrary Sobolev observables using $\Tr\varphi$. Equation~\eqref{eq:duality} then proves \eqref{eq:intro-correlation}, with the corresponding trace interpretation. In particular, taking the supremum over $\norm{\psi}_\infty\le1$ in the correlation recovers the $L^1$ norm in \eqref{eq:sharp-mixing}.

\begin{remark}\label{rem:L2-no-gap}
On $L^2_0(\mu)$, pullback is an isometry and $\Lop U=I$. Consequently,
\[
 \norm{\Lop^k\colon L^2_0(\mu)\to L^2_0(\mu)}=1.
\]
The factor $r$ in the Sobolev norm comes from the energy scaling of pullback. Thus the norm depends on the observable space, even though each fixed centered $L^2$ observable has transfer iterates converging strongly to zero, as proved below.
\end{remark}

\section{Statistical consequences}\label{sec:statistics}

The $L^1$ estimate controls correlations against every bounded observable. We first pass from critical energy to H\"older regularity and then deduce exactness and a limit theorem. These arguments apply to uniformly quasiregular maps in every real dimension $n\ge2$. For rational maps, the mixing and central limit statements belong to the established Sobolev theory \cite{DNS07,DS06}. The paper \cite{DNS07} also gives a convergence rate in the central limit theorem and a local central limit theorem for more regular observables under its non-coboundary hypotheses.

\subsection{H\"older observables and multiple correlations}

For $0<\alpha\le1$, write
\[
 [\varphi]_\alpha=\sup_{x\ne y}
 \frac{|\varphi(x)-\varphi(y)|}{\dist(x,y)^\alpha},
\]
where the distance is that of the smooth background metric.

\begin{theorem}\label{thm:holder}
For $0<\alpha\le1$ there is a constant $H_\alpha$, depending only on the background manifold, $\alpha$, and the ellipticity bound in \eqref{eq:ellipticity}, such that
\begin{equation}\label{eq:holder-decay}
 \norm{\Lop^k\varphi-\mu(\varphi)}_{L^1(\mu)}
 \le H_\alpha(1+C_f)[\varphi]_\alpha r^{\alpha k},
 \qquad \varphi\in C^\alpha(M),\quad k\ge0.
\end{equation}
The corresponding correlation bound follows by multiplying the right side by $\norm{\psi}_{L^\infty(\mu)}$. The exponent $\alpha/n$ is optimal in general, as shown by the expanding torus examples in Section~\ref{subsec:tori}.
\end{theorem}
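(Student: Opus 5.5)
The plan is a standard interpolation: split $\varphi$ into a smooth part, which the Sobolev estimate handles with rate $r^k$, and a small remainder, which the uniform contraction handles. Fix $k\ge0$ and a scale $\varepsilon\in(0,1]$, to be chosen as $\varepsilon=r^k=d^{-k/n}$. Using a finite atlas and a partition of unity on $M$, I would mollify $\varphi$ at scale $\varepsilon$ to get smooth $\varphi_\varepsilon$ with
\[
 \norm{\varphi-\varphi_\varepsilon}_{C(M)}\le c_1[\varphi]_\alpha\varepsilon^\alpha,
 \qquad \norm{\dd\varphi_\varepsilon}_{L^\infty}\le c_1[\varphi]_\alpha\varepsilon^{\alpha-1}.
\]
Here $c_1$ depends only on the background manifold and $\alpha$. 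The gradient bound comes from differentiating the mollifier after subtracting the value $\varphi(x)$, which is legitimate because the derivative of the mollifier integrates to zero. Since $m$ is a probability measure, \eqref{eq:ellipticity} then gives
\[
 E(\varphi_\varepsilon)\le\Lambda c_1[\varphi]_\alpha\varepsilon^{\alpha-1}.
\]

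Next I would write
\[
 \Lop^k\varphi-\mu(\varphi)=\bigl(\Lop^k\varphi_\varepsilon-\mu(\varphi_\varepsilon)\bigr)+\Lop^k(\varphi-\varphi_\varepsilon)-\mu(\varphi-\varphi_\varepsilon)
\]
and estimate the three terms separately.
\begin{itemize}
 \item For the first term, $\varphi_\varepsilon\in\Dcal$, so Theorem~\ref{thm:main} (equivalently, Proposition~\ref{prop:prefactor-bound} with the energy contraction of Lemma~\ref{lem:energy}) bounds its $L^1(\mu)$ norm by $A_fr^kE(\varphi_\varepsilon)\le C_f\Lambda c_1[\varphi]_\alpha r^k\varepsilon^{\alpha-1}$.
 \item For the second term, $\Lop$ is positive with $\Lop1=1$, so it is a uniform contraction on $C(M)$. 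Hence this term is at most $c_1[\varphi]_\alpha\varepsilon^\alpha$ in sup norm, and therefore in $L^1(\mu)$.
 \item The third term has the same bound $c_1[\varphi]_\alpha\varepsilon^\alpha$.
\end{itemize}

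Choosing $\varepsilon=r^k\le1$ turns the first bound into $C_f\Lambda c_1[\varphi]_\alpha r^{\alpha k}$ and each of the other two into $c_1[\varphi]_\alpha r^{\alpha k}$. Summing gives \eqref{eq:holder-decay} with $H_\alpha=c_1\max(2,\Lambda)$. This constant depends only on the manifold, $\alpha$ and $\Lambda$, as claimed. The correlation bound then follows from \eqref{eq:duality}, exactly as \eqref{eq:intro-correlation} followed from Theorem~\ref{thm:main}.

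The main point needing care is the mollification estimate on a manifold. The gradient bound $\varepsilon^{\alpha-1}$ must hold uniformly after gluing the local mollifications with a partition of unity. The derivatives of the cutoffs multiply $\varphi_\varepsilon-\varphi$, which is of size $\varepsilon^\alpha$, so they contribute only $O(\varepsilon^\alpha)\le O(\varepsilon^{\alpha-1})$. An alternative is to use the heat semigroup of the background metric. The optimality of the exponent is deferred to the torus examples, as the statement indicates.
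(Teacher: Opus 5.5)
Your proposal is correct and is essentially the paper's proof: smooth $\varphi$ at scale $\varepsilon$ with sup error $O([\varphi]_\alpha\varepsilon^\alpha)$ and energy $O([\varphi]_\alpha\varepsilon^{\alpha-1})$, apply Theorem~\ref{thm:main} (with $A_f\le C_f$) to the smooth part and the uniform contraction of $\Lop$ to the remainder, and take $\varepsilon$ proportional to $r^k$. The only difference is cosmetic: the paper restricts to $\varepsilon\le\varepsilon_0$ and sets $\varepsilon=\varepsilon_0r^k$, absorbing powers of $\varepsilon_0$ into $H_\alpha$, which avoids your need for the mollification estimates up to scale $1$.
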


\begin{proof}
Smoothing in a fixed finite atlas gives, for $0<\varepsilon\le\varepsilon_0$, a smooth function $\varphi_\varepsilon$ with
\[
 \norm{\varphi-\varphi_\varepsilon}_\infty
 \le C_0[\varphi]_\alpha\varepsilon^\alpha,
 \qquad
 E(\varphi_\varepsilon)
 \le C_1[\varphi]_\alpha\varepsilon^{\alpha-1}.
\]
The smoothing can be chosen to preserve constants. The second bound uses \eqref{eq:ellipticity}, so its constant includes the metric comparison. Uniform contraction of $\Lop$ and Theorem~\ref{thm:main} give
\[
 \norm{\Lop^k\varphi-\mu(\varphi)}_1
 \le2C_0[\varphi]_\alpha\varepsilon^\alpha
 +C_fC_1[\varphi]_\alpha r^k\varepsilon^{\alpha-1}.
\]
Choose $\varepsilon=\varepsilon_0r^k$ and absorb the fixed powers of $\varepsilon_0$ into $H_\alpha$.
\end{proof}

Multiple correlations can be estimated without differentiating products of iterated observables. Only the first observable in each remaining time block needs the two-point estimate.

\begin{corollary}\label{cor:multiple}
Let $0=t_0<t_1<\cdots<t_s$ be integers, let $\varphi_0,\ldots,\varphi_{s-1}\in\Dcal$, and let $\varphi_s\in L^\infty(\mu)$. The functions in $\Dcal$ are continuous on a compact manifold, so all the norms below are finite. Then
\begin{align}
 &\abs{\int_M\prod_{j=0}^s(\varphi_j\circ f^{t_j})\,\dd\mu
                  -\prod_{j=0}^s\mu(\varphi_j)}\notag\\
 &\hspace{8mm}\le A_f\sum_{j=0}^{s-1}
 r^{t_{j+1}-t_j}E(\varphi_j)
 \prod_{\substack{0\le\ell\le s\\\ell\ne j}}
 \norm{\varphi_\ell}_{L^\infty(\mu)}.\label{eq:multiple}
\end{align}
For $C^\alpha$ observables, each factor $A_fr^{t_{j+1}-t_j}E(\varphi_j)$ can instead be replaced by
\[
 H_\alpha(1+C_f)r^{\alpha(t_{j+1}-t_j)}[\varphi_j]_\alpha.
\]
\end{corollary}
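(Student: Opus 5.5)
The proof is a telescoping argument over the time blocks. Each step peels off one observable using the duality relation \eqref{eq:duality} and the sharp two-point estimate of Theorem~\ref{thm:main}.

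\emph{Reduction.} Set $\tau_j=t_{j+1}-t_j$ and define backward products
\[
 \Phi_s=\varphi_s,\qquad \Phi_j=\varphi_j\cdot(\Phi_{j+1}\circ f^{\tau_j}),\quad 0\le j\le s-1.
 \]
By invariance of $\mu$ under $U$, the integral of $\prod_{j}\varphi_j\circ f^{t_j}$ equals $\mu(\Phi_0)$. Moreover each $\Phi_{j+1}$ is bounded with $\norm{\Phi_{j+1}}_{L^\infty(\mu)}\le\prod_{\ell>j}\norm{\varphi_\ell}_{L^\infty(\mu)}$. Here we use that $U$ is an isometry on $L^\infty(\mu)$, and that for the continuous $\varphi_\ell$ the $L^\infty(\mu)$ norm is the sup over $\supp\mu$.

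\emph{One step.} For each $j$ we write
\[
 \mu(\Phi_j)-\mu(\varphi_j)\mu(\Phi_{j+1})
 =\int_M\bigl(\Lop^{\tau_j}\varphi_j-\mu(\varphi_j)\bigr)\Phi_{j+1}\,\dd\mu .
 \]
This uses \eqref{eq:duality} iterated $\tau_j$ times, extended to bounded measurable $\psi=\Phi_{j+1}$ via the balance relation as stated after Lemma~\ref{lem:telescoping}, together with $\mu(\Lop^{\tau_j}\varphi_j)=\mu(\varphi_j)$. By Theorem~\ref{thm:main}, this is bounded in absolute value by $A_f r^{\tau_j}E(\varphi_j)\prod_{\ell>j}\norm{\varphi_\ell}_{L^\infty(\mu)}$.

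\emph{Telescoping.} We write the difference as
\[
 \mu(\Phi_0)-\prod_{j=0}^s\mu(\varphi_j)=\sum_{j=0}^{s-1}\Bigl(\prod_{\ell<j}\mu(\varphi_\ell)\Bigr)\bigl(\mu(\Phi_j)-\mu(\varphi_j)\mu(\Phi_{j+1})\bigr).
 \]
To check this, note that the partial sums collapse because $\prod_{\ell<j}\mu(\varphi_\ell)\,\mu(\Phi_j)$ appears with opposite signs in consecutive terms, and $\Phi_s=\varphi_s$. Bounding $|\mu(\varphi_\ell)|\le\norm{\varphi_\ell}_{L^\infty(\mu)}$ for $\ell<j$ gives \eqref{eq:multiple}.

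\emph{H\"older case.} The identical argument applies with Theorem~\ref{thm:holder} in place of Theorem~\ref{thm:main} at the one-step estimate.

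The only point requiring care is the justification of duality with a merely bounded measurable second factor $\Phi_{j+1}$. I would handle it by the already noted extension of \eqref{eq:duality} to bounded measurable functions, since $\varphi_j$ is continuous and $\Lop$ acts on $L^1(\mu)$ compatibly. No differentiation of the products is ever needed.
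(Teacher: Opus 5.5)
Your proof is correct and is essentially the paper's argument. The paper peels off $\varphi_0$ using the correlation bound \eqref{eq:intro-correlation} with $\Psi=\prod_{j\ge1}\varphi_j\circ f^{t_j-t_1}$ (your $\Phi_1$) and then inducts on the time-shifted problem, which unrolls to exactly your telescoping sum. A small aside: since $t_0=0$, $\Phi_0$ equals the integrand pointwise, so invariance of $\mu$ is not needed for that identification.
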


\begin{proof}
The product after the first observable has the form $\Psi\circ f^{t_1}$, where
\[
 \Psi=\prod_{j=1}^s\varphi_j\circ f^{t_j-t_1},
 \qquad\norm{\Psi}_\infty\le\prod_{j=1}^s\norm{\varphi_j}_\infty.
\]
Apply \eqref{eq:intro-correlation} to $\varphi_0$ and $\Psi$. The remaining integral is the same problem with its first observable removed and all times shifted by $t_1$. Induction, using $|\mu(\varphi_j)|\le\norm{\varphi_j}_\infty$, proves \eqref{eq:multiple}. The H\"older version uses Theorem~\ref{thm:holder} at each step.
\end{proof}

\subsection{Exactness}

Let $\Bcal$ be the completed Borel $\sigma$-algebra of $(M,\mu)$. Exactness means that the tail $\sigma$-algebra $\bigcap_{k\ge0}f^{-k}\Bcal$ is trivial modulo $\mu$.

\begin{theorem}\label{thm:exactness}
For every $v\in L^1(\mu)$,
\begin{equation}\label{eq:L1-exact}
 \norm{\Lop^kv-\mu(v)}_{L^1(\mu)}\longrightarrow0.
\end{equation}
The system $(f,\mu)$ is exact and mixing of all orders. For every $1\le p<\infty$, $\Lop^kv\to\mu(v)$ in $L^p(\mu)$ for all $v\in L^p(\mu)$.
\end{theorem}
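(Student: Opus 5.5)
The plan is a density argument that transfers the $L^1$ decay of Theorem~\ref{thm:main} from smooth observables to all of $L^1(\mu)$. From there, exactness follows from the standard characterization through the transfer operator, and mixing of all orders follows from Corollary~\ref{cor:multiple} plus another density step.

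First, the convergence \eqref{eq:L1-exact}. Smooth functions are uniformly dense in $C(M)$, and $C(M)$ is dense in $L^1(\mu)$ since $\mu$ is a Borel probability measure on a compact metric space. Hence smooth functions are dense in $L^1(\mu)$. For smooth $\varphi$, Theorem~\ref{thm:main} gives $\norm{\Lop^k\varphi-\mu(\varphi)}_{L^1(\mu)}\le A_fr^kE(\varphi)\to0$. Given $v\in L^1(\mu)$ and $\varepsilon>0$, pick a smooth $\varphi$ with $\norm{v-\varphi}_{L^1(\mu)}<\varepsilon$. We know $\Lop$ is an $L^1(\mu)$ contraction and $|\mu(v)-\mu(\varphi)|\le\varepsilon$. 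Therefore $\limsup_k\norm{\Lop^kv-\mu(v)}_1\le2\varepsilon$.

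For $L^p$ with $1\le p<\infty$, the argument is the same density argument in $L^p(\mu)$, but it needs $L^p$ decay on a dense class. For bounded $v$, the $L^\infty$ contraction gives $|\Lop^kv-\mu(v)|\le2\norm{v}_\infty$. Interpolation then yields $\norm{\Lop^kv-\mu(v)}_p^p\le(2\norm v_\infty)^{p-1}\norm{\Lop^kv-\mu(v)}_1\to0$. Bounded functions are dense in $L^p(\mu)$, and $\Lop$ is an $L^p$ contraction, so convergence holds for all of $L^p$.

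Exactness comes next. Let $A\in\bigcap_kf^{-k}\Bcal$. Then for each $k$ we can write $\mathbf 1_A=\mathbf 1_{A_k}\circ f^k$ for some $A_k\in\Bcal$. By $\Lop^kU^k=I$ on $L^1(\mu)$, which follows from $\Lop U=I$ and the balance relation, we get $\Lop^k\mathbf 1_A=\mathbf 1_{A_k}$. Using that $U^k$ is an isometry of $L^1(\mu)$,
\[
\norm{\mathbf 1_A-\mu(A)}_{L^1(\mu)}=\norm{U^k(\mathbf 1_{A_k}-\mu(A))}_{L^1(\mu)}=\norm{\Lop^k\mathbf 1_A-\mu(A)}_{L^1(\mu)}\to0,
\]
where $\mu(A_k)=\mu(A)$ by invariance. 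Thus $\mathbf 1_A$ is a.e.\ constant, so $\mu(A)\in\{0,1\}$. The main point to check is the identity $\Lop^kU^k=I$ on bounded measurable functions, which the paper states extends from continuous functions via the balance relation.

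Finally, mixing of all orders. Corollary~\ref{cor:multiple} gives the convergence
\[
\int\prod_j\varphi_j\circ f^{t_j}\,\dd\mu\to\prod_j\mu(\varphi_j)
\]
when all gaps $t_{j+1}-t_j\to\infty$, for $\varphi_0,\dots,\varphi_{s-1}$ smooth and $\varphi_s$ bounded. To pass to indicator functions of measurable sets (or bounded functions), approximate each $\varphi_j$ for $j<s$ in $L^1(\mu)$ by smooth functions with the same sup bound; truncation, or uniform density followed by clipping at $\pm\norm{\varphi_j}_\infty$ preserved by smoothing of a clipped continuous function, keeps them bounded. The error from replacing $\varphi_j$ is then at most $\norm{\varphi_j-\tilde\varphi_j}_{L^1(\mu)}\prod_{\ell\ne j}\norm{\varphi_\ell}_\infty$, using invariance of $\mu$ under $f^{t_j}$. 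This step is routine. I expect no real obstacle beyond bookkeeping of the uniform bounds during approximation.
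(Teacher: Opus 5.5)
Your proposal is correct and follows the paper's proof essentially step for step. It uses the same density-plus-contraction argument for \eqref{eq:L1-exact}, the same interpolation bound for $L^p$, and the same identity $\mathbf 1_A=U^k\Lop^k\mathbf 1_A$ for tail sets: the paper phrases it through $\E_\mu[v\mid\Bcal_k]=U^k\Lop^kv$, while you use $\mathbf 1_A=\mathbf 1_{A_k}\circ f^k$ and $\Lop^kU^k=I$.

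The only difference is in mixing of all orders. You start from Corollary~\ref{cor:multiple} for smooth observables and then approximate. The paper skips that approximation by feeding \eqref{eq:L1-exact}, applied to the bounded first observable, directly into the first-step decomposition of Corollary~\ref{cor:multiple}. The error term $\norm{\Lop^{t_1}\varphi_0-\mu(\varphi_0)}_1\prod_{j\ge1}\norm{\varphi_j}_\infty$ then already tends to zero for arbitrary bounded observables.
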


\begin{proof}
Smooth functions are dense in $L^1(\mu)$ because $\mu$ is a finite Borel measure on a compact manifold. If $v_j$ is such an approximation, contraction gives
\[
 \norm{\Lop^kv-\mu(v)}_1
 \le2\norm{v-v_j}_1+\norm{\Lop^kv_j-\mu(v_j)}_1.
\]
First let $k\to\infty$ and then $j\to\infty$. Theorem~\ref{thm:main} proves \eqref{eq:L1-exact}.

For completeness, the implication to exactness can be seen directly from the transfer identity. With $\Bcal_k=f^{-k}\Bcal$, conditional expectation with respect to $\mu$ satisfies
\begin{equation}\label{eq:conditional}
 \E_\mu[v\mid\Bcal_k]=U^k\Lop^kv.
\end{equation}
Indeed, testing against bounded $\Bcal_k$-measurable functions reduces to testing against functions of the form $U^k\psi$, where \eqref{eq:conditional} follows from duality and invariance. If $A$ belongs to every $\Bcal_k$, then
\[
 \norm{\mathbf1_A-\mu(A)}_1
 =\norm{U^k(\Lop^k\mathbf1_A-\mu(A))}_1
 =\norm{\Lop^k\mathbf1_A-\mu(A)}_1\longrightarrow0.
\]
Thus $\mu(A)$ is zero or one. This is the transfer-operator characterization of exactness associated with Lin's criterion \cite{Lin71}.

To obtain mixing of all orders for arbitrary bounded observables, repeat the first-step decomposition in the proof of Corollary~\ref{cor:multiple}. Its first error is bounded by
\[
 \norm{\Lop^{t_1}\varphi_0-\mu(\varphi_0)}_1
 \prod_{j=1}^s\norm{\varphi_j}_\infty,
\]
which tends to zero by \eqref{eq:L1-exact}. Induction applies as every consecutive time gap tends to infinity.

Finally, for bounded $v$ and finite $p$,
\begin{equation}\label{eq:Lp-interpolation}
 \norm{\Lop^kv-\mu(v)}_p^p
 \le(2\norm{v}_\infty)^{p-1}
       \norm{\Lop^kv-\mu(v)}_1.
\end{equation}
Approximation by bounded functions and $L^p$ contraction prove the last assertion.
\end{proof}

\subsection{A central limit theorem}

The summable transfer estimates give Gordin's martingale decomposition \cite{Gordin69}. We use it to prove the central limit theorem and characterize zero variance by $L^2$ coboundaries; Hall and Heyde \cite{HallHeyde80} provide the martingale background.

\begin{theorem}\label{thm:CLT}
Let $\varphi$ be real-valued, with $\mu(\varphi)=0$, and suppose that either $\varphi\in\Dcal$ or $\varphi\in C^\alpha(M)$ for some $0<\alpha\le1$. Then, as random variables on the probability space $(M,\mu)$,
\begin{equation}\label{eq:CLT}
 \frac1{\sqrt N}\sum_{j=0}^{N-1}\varphi\circ f^j
 \ \xrightarrow{\ \mathrm{law}\ }\ \mathcal N(0,\sigma_\varphi^2),
\end{equation}
where
\begin{equation}\label{eq:variance}
 \sigma_\varphi^2=\mu(\varphi^2)
 +2\sum_{j=1}^{\infty}\mu\big(\varphi(\varphi\circ f^j)\big).
\end{equation}
The series is absolutely convergent. Moreover, $\sigma_\varphi^2=0$ if and only if
\[
 \varphi=\chi\circ f-\chi
 \quad\mu\text{-almost everywhere}
\]
for some real $\chi\in L^2(\mu)$. Here $\mathcal N(0,0)$ denotes the point mass at zero.
\end{theorem}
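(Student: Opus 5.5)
We follow Gordin's method, using the transfer operator $\Lop$ as the dual of $U$ on $L^2(\mu)$. The first step is summability of $\norm{\Lop^j\varphi}_{L^2(\mu)}$.

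For $\varphi\in\Dcal$ with $\mu(\varphi)=0$, Theorem~\ref{thm:main} gives $\norm{\Lop^j\varphi}_{L^1(\mu)}\le A_fr^jE(\varphi)$. The function $\varphi$ is continuous, and $\Lop$ contracts the uniform norm, so $\norm{\Lop^j\varphi}_\infty\le\norm{\varphi}_\infty$. The interpolation \eqref{eq:Lp-interpolation} with $p=2$ then yields
\[
 \norm{\Lop^j\varphi}_{L^2(\mu)}\le\bigl(2\norm{\varphi}_\infty A_fE(\varphi)\bigr)^{1/2}r^{j/2},
\]
which is summable. For $\varphi\in C^\alpha(M)$ we argue the same way, using Theorem~\ref{thm:holder} in place of Theorem~\ref{thm:main}; this gives a bound of order $r^{\alpha j/2}$.

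Absolute convergence of \eqref{eq:variance} follows from the duality \eqref{eq:duality} and Cauchy--Schwarz:
\[
 |\mu(\varphi\,\varphi\circ f^j)|=|\mu(\Lop^j\varphi\cdot\varphi)|\le\norm{\varphi}_2\norm{\Lop^j\varphi}_2.
\]

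Next we build the martingale decomposition. Set
\[
 \chi=\sum_{j\ge1}\Lop^j\varphi\in L^2(\mu),
 \qquad
 \psi=\varphi+\chi-U\chi .
\]
Since $\Lop U=I$, we get $\Lop\psi=\Lop\varphi+\Lop\chi-\chi=0$. By \eqref{eq:conditional}, $\E[\psi\mid\Bcal_1]=U\Lop\psi=0$. Hence the sequence $\psi\circ f^j$ is a sequence of reverse martingale differences for the decreasing filtration $\Bcal_j=f^{-j}\Bcal$.

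Birkhoff sums then satisfy
\[
 S_N\varphi=S_N\psi+\chi\circ f^N-\chi,
\]
and the coboundary term divided by $\sqrt N$ tends to $0$ in $L^2$. For the martingale sums, reverse time: on each block $0,\dots,N-1$, the variables $\psi\circ f^{N-1-j}$ are martingale differences with respect to the increasing filtration $\Bcal_{N-1-j}$. We then apply the martingale CLT (Hall--Heyde, or Billingsley's version for stationary ergodic differences). Its conditional variance condition,
\[
 \frac1N\sum_{j=0}^{N-1}\E[\psi^2\circ f^j\mid\Bcal_{j+1}]=\frac1N\sum_{j=0}^{N-1}U^{j+1}\Lop(\psi^2)\to\mu(\psi^2),
\]
follows from Birkhoff's ergodic theorem, since exactness in Theorem~\ref{thm:exactness} gives ergodicity. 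The Lindeberg condition holds automatically by stationarity. This step, transferring the standard forward-filtration CLT to the reverse setting, is where care is needed, but it is standard.

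To identify the variance, we compute $\mu(\psi^2)$. Using $\Lop\psi=0$, the mutual orthogonality of the $U^j\psi$, and the $U$-invariance of $\mu$, we have
\[
 \mu(\psi^2)=\lim_N\frac1N\norm{S_N\varphi}_2^2=\sigma_\varphi^2,
\]
and the last equality is the usual Ces\`aro computation with the absolutely convergent correlations.

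Finally, suppose $\sigma_\varphi^2=0$. Then $\psi=0$, so $\varphi=U\chi-\chi=\chi\circ f-\chi$, and $\chi$ is real because $\varphi$ is real. Conversely, if $\varphi=\chi\circ f-\chi$ with $\chi\in L^2(\mu)$, then $\norm{S_N\varphi}_2\le2\norm{\chi}_2$, so $\frac1N\norm{S_N\varphi}_2^2\to0$ and hence $\sigma_\varphi^2=0$.
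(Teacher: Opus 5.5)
Your proposal is correct and follows the paper's proof essentially step for step. The steps match: geometric $L^1$ decay from Theorem~\ref{thm:main} or Theorem~\ref{thm:holder} is upgraded to summable $L^2$ decay using boundedness, then Gordin's decomposition $\psi=\varphi+\chi-U\chi$ with $\Lop\psi=0$, then the reversed martingale difference array whose conditional variance $\frac1N\sum_j U^{j+1}\Lop(\psi^2)$ tends to $\mu(\psi^2)$ by ergodicity, and finally the coboundary characterization from the $L^2$-bounded remainder; the only differences are cosmetic, namely your harmless extra factor $2$ in the $L^2$ bound and your citing Hall--Heyde or Billingsley where the paper invokes Brown's martingale central limit theorem.
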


\begin{proof}
Theorems~\ref{thm:main} and \ref{thm:holder} give geometric decay of $\norm{\Lop^j\varphi}_1$. Since $\varphi$ is bounded and centered,
\[
 \norm{\Lop^j\varphi}_2^2
 \le\norm{\varphi}_\infty\norm{\Lop^j\varphi}_1.
\]
It follows that $\sum_{j\ge1}\norm{\Lop^j\varphi}_2<\infty$. Following Gordin \cite{Gordin69}, define
\begin{equation}\label{eq:martingale-decomposition}
 \chi=\sum_{j=1}^{\infty}\Lop^j\varphi\in L^2(\mu),
 \qquad v=\varphi+\chi-U\chi.
\end{equation}
Then $\Lop\chi=\chi-\Lop\varphi$ and $\Lop U=I$, so $\Lop v=0$. In particular, by \eqref{eq:conditional}, the sequence $v\circ f^j$ consists of reverse martingale differences for the decreasing $\sigma$-algebras $\Bcal_j$.

To apply the martingale central limit theorem, we verify the conditional variance and Lindeberg conditions. For each $N$, reverse the first $N$ differences and divide them by $\sqrt N$; this is a martingale difference array with an increasing filtration. Its sum of conditional variances is
\[
 \frac1N\sum_{j=0}^{N-1}\big(\Lop(v^2)\big)\circ f^{j+1}.
\]
Ergodicity, which follows from exactness, implies that this expression converges almost everywhere to $\mu(v^2)$. For every $\varepsilon>0$, the expectation of the conditional Lindeberg sum is
\[
 \int_M v^2\mathbf1_{\{|v|>\varepsilon\sqrt N\}}\,\dd\mu,
\]
which tends to zero since $v\in L^2(\mu)$. Thus that sum tends to zero in probability. The martingale central limit theorem \cite{Brown71} applies and gives a normal limit of variance $\mu(v^2)$.

On summing \eqref{eq:martingale-decomposition},
\begin{equation}\label{eq:sum-decomposition}
 \sum_{j=0}^{N-1}\varphi\circ f^j
 =\sum_{j=0}^{N-1}v\circ f^j+U^N\chi-\chi.
\end{equation}
The last two terms divided by $\sqrt N$ tend to zero in $L^2$, because $U$ is an isometry. This proves \eqref{eq:CLT} with variance $\mu(v^2)$.

The correlation estimates imply absolute convergence in \eqref{eq:variance}. Expanding the variance of the left side of \eqref{eq:sum-decomposition} and dividing by $N$ gives the limit in \eqref{eq:variance}. The martingale differences are pairwise orthogonal, and the remainder in \eqref{eq:sum-decomposition} has bounded $L^2$ norm. The same limit is therefore $\mu(v^2)$.

If the variance vanishes, $v=0$ almost everywhere, and \eqref{eq:martingale-decomposition} gives $\varphi=U\chi-\chi$. Conversely, an $L^2$ coboundary has partial sums of uniformly bounded $L^2$ norm. Their normalized variances tend to zero, so \eqref{eq:variance} vanishes.
\end{proof}

For $\varphi\in\Dcal$ the transfer function in \eqref{eq:martingale-decomposition} also comes from the Sobolev space. Indeed, the series converges there by \eqref{eq:resolvent}, and its trace equals the $L^2$ sum. In particular,
\[
 E(\chi)\le\frac{r}{1-r}E(\varphi),
\]
with $E(\chi)$ understood for this Sobolev representative. For a H\"older observable, the decomposition gives $\chi\in L^2(\mu)$.

\section{The trace constant in families}\label{sec:families}

Continuity, conjugacy invariance, and the atomic obstruction are the three ingredients of the moduli compactness theorem. We develop the measure estimates first, then combine them with the degeneration theory of rational maps. Throughout this section the background manifold and its normalized volume are fixed. For a bounded elliptic structure $G$ and a Borel probability measure $\nu$, set
\begin{equation}\label{eq:measure-trace-constant}
 A_G(\nu)=\sup_{\substack{v\in\Dcal\\E_G(v)>0}}
 \frac{\norm{v-\nu(v)}_{L^1(\nu)}}{E_G(v)}\in[0,\infty].
\end{equation}
Thus $A_f=A_G(\mu_f)$ for the structure fixed earlier. The definition makes sense without a dynamical origin for $\nu$.

\subsection{Stability and continuity for rational maps}

For a finite signed measure $\sigma$ of total mass zero, define the possibly infinite discrepancy
\[
 \norm{\sigma}_{E^*}
 =\sup_{\substack{v\in\Dcal\\E_G(v)\le1}}|\sigma(v)|.
\]
Constants do not affect this supremum. The notation uses the fixed energy $E=E_G$.

\begin{lemma}\label{lem:trace-stability}
If $\mu$ and $\nu$ are probabilities with finite trace constants and $\norm{\mu-\nu}_{E^*}<\infty$, then
\begin{equation}\label{eq:trace-stability}
 |A_G(\mu)-A_G(\nu)|\le2\norm{\mu-\nu}_{E^*}.
\end{equation}
\end{lemma}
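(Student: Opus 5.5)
The plan is a direct comparison of the two centered oscillations for a single test function, followed by taking suprema and symmetrizing. Write $\delta=\norm{\mu-\nu}_{E^*}$ and fix $v\in\Dcal$ with $E_G(v)>0$. The aim is to show
\[
 \norm{v-\mu(v)}_{L^1(\mu)}\le\norm{v-\nu(v)}_{L^1(\nu)}+2\delta E_G(v).
\]
Dividing by $E_G(v)$ and taking the supremum then gives $A_G(\mu)\le A_G(\nu)+2\delta$. Exchanging the roles of $\mu$ and $\nu$ gives the reverse bound, since $\norm{\nu-\mu}_{E^*}=\delta$. Both constants are finite by hypothesis, so subtracting is legitimate and yields \eqref{eq:trace-stability}.

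For the displayed inequality, first change the centering constant. By the triangle inequality in $L^1(\mu)$, with $\mu$ a probability measure,
\[
 \norm{v-\mu(v)}_{L^1(\mu)}\le\norm{v-\nu(v)}_{L^1(\mu)}+|\mu(v)-\nu(v)|.
\]
The second term is $|(\mu-\nu)(v)|\le\delta E_G(v)$, directly from the definition of the discrepancy.

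Next, change the measure in the first term. This is the same trick as in Lemma~\ref{lem:trace}: apply the discrepancy to an absolute value. Set $w=|v-\nu(v)|$. The Sobolev chain rule gives $w\in\Dcal$ and $E_G(w)\le E_G(v)$, as already used in Lemma~\ref{lem:trace}, including for complex $v$. Hence
\[
 \int w\,\dd\mu-\int w\,\dd\nu=(\mu-\nu)(w)\le\delta E_G(w)\le\delta E_G(v).
\]
That is, $\norm{v-\nu(v)}_{L^1(\mu)}\le\norm{v-\nu(v)}_{L^1(\nu)}+\delta E_G(v)$. Combining the two steps gives the desired inequality with total error $2\delta E_G(v)$.

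The only points needing care are technical. One is whether $\norm{\cdot}_{E^*}$ is taken over real or complex test functions. The absolute value $w$ is real, so that step causes no difficulty. For the centering term with complex $v$, I would rotate by a constant phase so that $(\mu-\nu)(e^{i\theta}v)$ is real. Its modulus is then $|(\mu-\nu)(\operatorname{Re}(e^{i\theta}v))|$, and this real part has energy at most $E_G(v)$. So a real-valued definition of the discrepancy suffices; this mirrors the remark before Proposition~\ref{prop:variational}. The other point is that all integrals are finite, which holds because functions in $\Dcal$ are continuous on the compact manifold $M$. I expect no real obstacle beyond these bookkeeping remarks.
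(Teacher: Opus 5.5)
Your proof is correct and is essentially the paper's argument. Both proofs use the same two ingredients: the chain-rule bound $E_G(|v-c|)\le E_G(v)$, so that the discrepancy applies to an absolute value, and a change of centering constant costing $|\mu(v)-\nu(v)|\le\norm{\mu-\nu}_{E^*}E_G(v)$. The paper applies the discrepancy to $|v-\mu(v)|$ and changes the centering under $\nu$, which gives the two-sided bound for each $v$ directly; you apply it to $|v-\nu(v)|$, change the centering under $\mu$, and then symmetrize.
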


\begin{proof}
Take $v\in\Dcal$ with $E(v)\le1$. By the Sobolev chain rule, $E(|v-\mu(v)|)\le1$. Adding and subtracting $\nu(|v-\mu(v)|)$, and using the reverse triangle inequality for the absolute value, gives
\begin{align*}
 &\big|\mu(|v-\mu(v)|)-\nu(|v-\nu(v)|)\big|\\
 &\quad\le |(\mu-\nu)(|v-\mu(v)|)|+|\mu(v)-\nu(v)|
 \le2\norm{\mu-\nu}_{E^*}.
\end{align*}
Taking suprema over the energy unit ball proves the assertion.
\end{proof}

For rational maps we always use $S^2$ with its round metric and standard conformal structure. The space $\mathrm{Rat}_d$ consists of coprime pairs of homogeneous polynomials of degree $d$, modulo a common nonzero scalar, with the induced coefficient topology. In particular, a limit taken inside $\mathrm{Rat}_d$ still has degree $d$.

\begin{proposition}\label{prop:rational-continuity}
For every $d\ge2$, the map $f\mapsto A_f$ is continuous on $\mathrm{Rat}_d$. Consequently, the constants $A_f$ are uniformly bounded on every compact subset of $\mathrm{Rat}_d$.
\end{proposition}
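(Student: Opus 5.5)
The plan is to combine Lemma~\ref{lem:trace-stability} with a quantitative estimate of $\norm{\mu_f-\mu_g}_{E^*}$ for nearby maps $f,g\in\mathrm{Rat}_d$. Continuity then follows once we show $\norm{\mu_f-\mu_g}_{E^*}\to0$ as $g\to f$ in the coefficient topology. Uniform boundedness on compact sets is then immediate, since each $A_f$ is finite by Proposition~\ref{prop:prefactor-bound}. The discrepancy will be estimated through the telescoping representation of Lemma~\ref{lem:telescoping}, which expresses $\mu(v)$ as a limit of $m(\Lop^kv)$ with a geometric tail controlled uniformly.

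\emph{Uniform telescoping estimate.} For $g$ near $f$, write $\mu_g(v)-m(v)=\sum_{j\ge0}\eta_g(\Lop_g^jv)$. Lemma~\ref{lem:energy} gives $E(\Lop_g^jv)\le r^jE(v)$ with $r=d^{-1/2}$, so the $j$th term is at most $b_gr^jE(v)$. By Lemma~\ref{lem:telescoping} the tail beyond $N$ is at most $b_g r^N E(v)/(1-r)$. I need $b_g$ bounded near $f$. Using \eqref{eq:dual-bound}, this reduces to $\norm{J_g/d-1}_{L^q(m)}$ being locally bounded. For rational maps $J_g=|g^\#|^2$ is a smooth function on $S^2$ depending continuously on the coefficients. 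Since $g$ stays in $\mathrm{Rat}_d$ (coprime pairs), the spherical derivative is uniformly bounded near $f$, and we may take $q=\infty$, $q'=1$. So $b_g\le S_1\norm{\rho_g}_\infty$ is locally bounded, and in fact $\norm{\rho_g-\rho_f}_\infty\to0$.

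\emph{Finite-step comparison.} For fixed $N$, we must show
\[
\sup_{E(v)\le1}\Big|\sum_{j<N}\big(\eta_g(\Lop_g^jv)-\eta_f(\Lop_f^jv)\big)\Big|\to0.
\]
It is cleaner to write $\sum_{j<N}\eta_g(\Lop_g^jv)=m(\Lop_g^Nv)-m(v)=\int v\,\dd m_{N,g}-m(v)$, where $m_{N,g}=(g^N)^*m/d^N$ has smooth density $J_{g^N}/d^N$. Thus the finite part equals $\int (v-m(v))(J_{g^N}-J_{f^N})d^{-N}\,\dd m$. By Sobolev--Poincar\'e \eqref{eq:sobolev-poincare} with $p=1$, this is bounded by $S_1\norm{J_{g^N}-J_{f^N}}_\infty d^{-N}E(v)$. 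The Jacobian $J_{g^N}$ is the squared spherical derivative of the iterate, a continuous function of the coefficients of $g$ uniformly on $S^2$, because iteration is continuous on $\mathrm{Rat}_{d^N}$. Hence this term tends to zero for each fixed $N$.

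\emph{Conclusion.} Combining the two steps gives $\norm{\mu_g-\mu_f}_{E^*}\le S_1d^{-N}\norm{J_{g^N}-J_{f^N}}_\infty+(b_f+b_g)r^N/(1-r)$. Choose $N$ large first, then $g$ close to $f$, and apply \eqref{eq:trace-stability}. The main point to verify carefully is the uniform local bound on the spherical derivative, equivalently on $b_g$, over a neighborhood in $\mathrm{Rat}_d$. I expect this to follow from the fact that resultant nonvanishing keeps $g$ a smooth degree-$d$ map with $C^1$ norm depending continuously on coefficients. This fails only at the boundary of $\mathrm{Rat}_d$, which is why the statement is local.
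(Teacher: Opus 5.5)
Your proposal is correct and is essentially the paper's proof. Both arguments use a locally uniform bound on $b_g$ from \eqref{eq:dual-bound}, the tail estimate of Lemma~\ref{lem:telescoping} at step $N$, the finite-iterate comparison $\norm{m_{N,g}-m_{N,f}}_{E^*}\le S_1d^{-N}\norm{J_{g^N}-J_{f^N}}_\infty$ via Sobolev--Poincar\'e, and then Lemma~\ref{lem:trace-stability}. The only cosmetic difference is that you apply \eqref{eq:dual-bound} with $q=\infty$ (constant $S_1$) where the paper uses $q=2$; both choices are valid because the spherical Jacobians are uniformly bounded near $f$.
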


\begin{proof}
Let $f_j\to f$ in $\mathrm{Rat}_d$, put $r=d^{-1/2}$, and write $P_g=b_g/(1-r)$ for any map $g$ in this space. The maps converge smoothly on the sphere, and for each fixed $N$ so do $f_j^N$ and $f^N$. Their one-step Jacobians are uniformly bounded. The estimate \eqref{eq:dual-bound}, with $q=2$, therefore gives $\sup_j P_{f_j}<\infty$.

Set $m_{N,g}=d^{-N}(g^N)^*m$. Lemma~\ref{lem:telescoping} gives
\begin{equation}\label{eq:family-discrepancy}
 \norm{\mu_{f_j}-\mu_f}_{E^*}
 \le (P_{f_j}+P_f)r^N
       +\norm{m_{N,f_j}-m_{N,f}}_{E^*}.
\end{equation}
The last signed measure has density $d^{-N}(J_{f_j^N}-J_{f^N})$ of integral zero. Subtracting $m(v)$ in its pairing with $v$ and applying Sobolev--Poincar\'e yields
\[
 \norm{m_{N,f_j}-m_{N,f}}_{E^*}
 \le S_1d^{-N}\norm{J_{f_j^N}-J_{f^N}}_{L^\infty(m)}.
\]
For fixed $N$, this tends to zero as $j\to\infty$. First choose $N$ large in \eqref{eq:family-discrepancy}, and then choose $j$ large. It follows that $\norm{\mu_{f_j}-\mu_f}_{E^*}\to0$, so Lemma~\ref{lem:trace-stability} proves continuity. The compactness assertion follows.
\end{proof}

The proof gives the explicit finite-iterate estimate
\begin{equation}\label{eq:continuity-modulus}
 |A_g-A_f|\le 2(P_g+P_f)r^N
       +2S_1d^{-N}\norm{J_{g^N}-J_{f^N}}_\infty,
 \qquad f,g\in\mathrm{Rat}_d.
\end{equation}
This estimate controls variation within $\mathrm{Rat}_d$. To obtain compactness in moduli, we next establish invariance under changes of coordinates and examine boundary limits of the equilibrium measures.

\subsection{Quasiconformal conjugacy}

The constant $A_f$ depends on the choice of invariant structure, which has been suppressed until now; in this subsection we write $A_{f,G}$ to display it. Transporting $G$ along a conjugacy carries the critical energy with it, leaving the constant unchanged. Derivatives and Jacobians in the following formulas are taken with respect to the fixed smooth background metric, whose choice is not displayed.

An invariant conformal structure need not be unique. The second part of the next proposition compares the constants obtained from structures chosen separately for $f$ and for $g$.

\begin{proposition}\label{prop:qc-conjugacy}
Let $h\colon M\to M$ be an orientation-preserving quasiconformal homeomorphism and let $g=h\circ f\circ h^{-1}$. Transport $G$ by
\begin{equation}\label{eq:transported-G}
 G^h(hx)=J_h(x)^{2/n}Dh(x)^{-\mathsf T}G(x)Dh(x)^{-1}
 \quad\text{almost everywhere}.
\end{equation}
Then $G^h$ is a bounded elliptic, determinant-one structure invariant under $g$, and
\begin{equation}\label{eq:conjugacy-exact}
 \mu_g=h_*\mu_f,\qquad
 E_{G^h}(v)=E_G(v\circ h),\qquad
 A_{g,G^h}=A_{f,G}.
\end{equation}
If instead $G_f$ is any $f$-invariant structure and $G_g$ is any $g$-invariant structure, not necessarily the transport $G_f^h$, with covector comparison constants $\Lambda_f$ and $\Lambda_g$ as in \eqref{eq:ellipticity}, then
\begin{equation}\label{eq:conjugacy-comparison}
 \frac{A_{f,G_f}}{\Lambda_f\Lambda_g K_O(h^{-1})^{1/n}}
 \le A_{g,G_g}
 \le\Lambda_f\Lambda_g K_O(h)^{1/n}A_{f,G_f},
\end{equation}
where $K_O(h)$ is defined by $\norm{Dh}^n\le K_O(h)J_h$ almost everywhere.
\end{proposition}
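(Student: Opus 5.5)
The plan is to verify the three identities in \eqref{eq:conjugacy-exact} in order, and then deduce \eqref{eq:conjugacy-comparison} by comparing energies through \eqref{eq:ellipticity}.

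\emph{Properties of $G^h$.} Taking determinants in \eqref{eq:transported-G} gives $\det G^h=J_h^{2}\det(Dh)^{-2}\det G=1$. For bounded ellipticity, combine $\norm{Dh}^n\le K_O(h)J_h$ with the inner-dilatation bound on $\ell(Dh)$: the eigenvalues of $J_h^{2/n}Dh^{-\mathsf T}Dh^{-1}$ lie between $K^{-2/n}$ and $K^{2/n}$, and composing with the bounds for $G$ gives constants depending only on $\Lambda$ and $K(h)$. For invariance, differentiate $g^k\circ h=h\circ f^k$, substitute \eqref{eq:transported-G} at $hx$ and at $h f^k x$, and apply \eqref{eq:beltrami} for $f$. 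The Jacobian factors combine as $J_{h}(f^kx)^{2/n}J_{f^k}(x)^{2/n}J_h(x)^{-2/n}=J_{g^k}(hx)^{2/n}$. This uses the chain rule for quasiregular maps almost everywhere, together with the fact that $h$ and $h^{-1}$ preserve null sets (Lusin $N$ and $N^{-1}$), so that ``almost everywhere'' is preserved under composition.

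\emph{Energy identity.} The covector analogue of \eqref{eq:transported-G} is $|\xi|_{G^h(hx)}=J_h(x)^{-1/n}|\xi\circ Dh(x)|_{G(x)}$. By the Sobolev chain rule, $\dd(v\circ h)=\dd v\circ Dh$, so
\[
 |\dd(v\circ h)|_G^n=(|\dd v|_{G^h}^n\circ h)J_h.
\]
Change of variables for the homeomorphism $h$ then gives $E_{G^h}(v)=E_G(v\circ h)$. Composition with $h$ is a bijection of $\Dcal$, since quasiconformal maps preserve continuity and $W^{1,n}$.

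\emph{Equilibrium measures and $A$.} The measure $h_*\mu_f$ is non-atomic and satisfies the balance relation for $g$, since local indices are preserved by topological conjugacy. I would identify it with $\mu_g$ through the characterization in Lemma~\ref{lem:telescoping}: $\mu_g$ is the weak limit of $\Lop_g^k$ averages, and $\Lop_g(v)\circ h=\Lop_f(v\circ h)$. Hence $\mu_g(v)=\lim m(\Lop_g^kv)$. Alternatively, I would use uniqueness of the balanced non-exceptional measure (equidistribution of preimages) from \cite{OP14}: writing $\mu_g(v)=\lim\Lop_g^k v(y)$ for generic $y$ gives directly $\mu_g(v)=\mu_f(v\circ h)$. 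This identification is the step I expect to need the most care, since the background volume is not preserved by $h$. The pointwise/uniqueness route avoids that issue. Once $\mu_g=h_*\mu_f$ is established, the substitution $v\mapsto v\circ h$ gives $\norm{v-\mu_g(v)}_{L^1(\mu_g)}=\norm{v\circ h-\mu_f(v\circ h)}_{L^1(\mu_f)}$, and therefore $A_{g,G^h}=A_{f,G}$.

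\emph{Comparison.} For arbitrary invariant structures $G_f$ and $G_g$, I would compare $E_{G_g}(v)$ with $E_{G_f}(v\circ h)$ directly. Bound $|\dd v|_{G_g}\ge\Lambda_g^{-1}|\dd v|$. Then
\[
 |\dd(v\circ h)|\le\norm{Dh}\,|\dd v|\circ h,\qquad \norm{Dh}^n\le K_O(h)J_h,
\]
and change of variables give
\[
 E_{G_f}(v\circ h)\le\Lambda_f\Lambda_gK_O(h)^{1/n}E_{G_g}(v).
\]
Together with $\mu_g=h_*\mu_f$, this yields the upper bound $A_{g,G_g}\le\Lambda_f\Lambda_gK_O(h)^{1/n}A_{f,G_f}$. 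The lower bound follows by the same argument with $h^{-1}$ in place of $h$ and the roles of $f$ and $g$ exchanged.
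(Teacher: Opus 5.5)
Your proposal is correct. It matches the paper on the properties of $G^h$, the covector identity $|\xi|_{G^h(hx)}=J_h(x)^{-1/n}|\xi\circ Dh(x)|_{G(x)}$ with change of variables, and the comparison estimate via $\norm{Dh}^n\le K_O(h)J_h$ and ellipticity. The paper also proves that $v\mapsto v\circ h$ is a bijection of $\Dcal$, by smoothing and the bound $E_0(v\circ h)^n\le K_O(h)E_0(v)^n$; you quote this as standard.

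The genuine difference is how you identify $\mu_g=h_*\mu_f$. Your first route stops exactly where the paper's proof does its work. Since $\Lop_g^kv=\Lop_f^k(v\circ h)\circ h^{-1}$, one has $m(\Lop_g^kv)=\int_M\Lop_f^k(v\circ h)\,J_h\,\dd m$. So one must show that normalized pullbacks of $\omega=h^*m=J_hm$, not of $m$, converge to $\mu_f$. The paper closes this with higher integrability, $J_h\in L^q(m)$ for some $q>1$. Because $m(J_h)=1$, H\"older, Sobolev--Poincar\'e and the energy contraction give
\[
\Bigl|\int_M(J_h-1)\Lop_f^k\psi\,\dd m\Bigr|\le S_{q'}\norm{J_h-1}_{L^q(m)}r^kE_G(\psi)\to0,
\]
and functoriality $(g^k)^*m/d^k=h_*\bigl((f^k)^*\omega/d^k\bigr)$ finishes the argument.

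Your second route, through the equidistribution of iterated preimages in \cite{OP14}, is also valid, but ``generic $y$'' needs to be made precise. You need $y\notin E_g$ and $h^{-1}(y)\notin E_f$ at the same time. The exceptional sets there have Hausdorff dimension at most $n-1$, so they are volume-null. Lusin's condition $N$ for $h$ then makes $E_g\cup h(E_f)$ null, so such a $y$ exists. Argue with measure here, since $h$ need not preserve Hausdorff dimension. You need no uniqueness theorem for balanced measures, and should not invoke one; the pointwise limit alone gives $\mu_g(v)=\mu_f(v\circ h)$.

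The trade-off is this. The paper's argument is self-contained given Lemma~\ref{lem:telescoping} and needs only higher integrability of $J_h$. Yours avoids any estimate on $J_h$, but imports the deeper pointwise equidistribution theorem.
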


\begin{proof}
We use the composition and change-of-variables properties of quasiregular mappings from \cite{Rickman93}. The composition formula for distortion shows that $g$ is uniformly quasiregular. Since $J_h>0$ almost everywhere, \eqref{eq:transported-G} defines a positive symmetric structure of determinant one. Quasiconformality of $h$ and ellipticity of $G$ give bounded ellipticity of $G^h$. The identity
\[
 Dh(x)^{\mathsf T}G^h(hx)Dh(x)=J_h(x)^{2/n}G(x),
\]
together with \eqref{eq:beltrami} and the chain rule, proves invariance under $g$. We give the scalar composition argument needed below. For smooth $v$, the Sobolev chain rule and change of variables give
\begin{equation}\label{eq:qc-composition-energy}
 \begin{split}
 E_0(v\circ h)^n
 &\le \int_M |\dd v|^n(hx)\norm{Dh(x)}^n\,\dd m(x)\\
 &\le K_O(h)\int_M |\dd v|^n(hx)J_h(x)\,\dd m(x)
 =K_O(h)E_0(v)^n,
 \end{split}
\end{equation}
where $E_0$ is the background critical energy. Given $v\in\Dcal$, approximate it both uniformly and in $W^{1,n}$ by smooth $v_j$. Applying \eqref{eq:qc-composition-energy} to differences shows that $v_j\circ h$ is Cauchy in $W^{1,n}$; uniform convergence identifies its limit as $v\circ h$. The same argument for $h^{-1}$ proves that composition by $h$ is a bijection of $\Dcal$ and extends \eqref{eq:qc-composition-energy} to that space.

For smooth $v$, the dual form of the displayed identity for $Dh$ gives
\[
 |\dd(v\circ h)|_G(x)^n
 =|\dd v|_{G^h}(hx)^nJ_h(x).
\]
Integration proves $E_{G^h}(v)=E_G(v\circ h)$. The same approximation and ellipticity pass this equality to every $v\in\Dcal$, proving the energy assertion in \eqref{eq:conjugacy-exact}.

To verify equivariance of the measures, let $\omega=h^*m=J_hm$. Higher integrability gives $J_h\in L^q(m)$ for some $q>1$. For $\psi\in\Dcal$, \eqref{eq:energy-pair} and Sobolev--Poincar\'e give
\begin{align*}
 \left|\frac{(f^k)^*\omega}{d^k}(\psi)-m_k(\psi)\right|
 &=\left|\int_M(J_h-1)\Lop^k\psi\,\dd m\right|\\
 &\le S_{q'}\norm{J_h-1}_{L^q(m)}r^kE_G(\psi).
\end{align*}
Here $m(J_h)=1$ because $h$ has degree one. Lemma~\ref{lem:telescoping} and uniform approximation of continuous functions show that $d^{-k}(f^k)^*\omega\rightharpoonup\mu_f$. Functoriality of pullback gives
\[
 \frac{(g^k)^*m}{d^k}
 =h_*\left(\frac{(f^k)^*\omega}{d^k}\right).
\]
Taking weak limits proves $\mu_g=h_*\mu_f$. Together with the energy equality and the bijection $v\mapsto v\circ h$ of $\Dcal$, this proves equality of the optimal trace constants.

Finally, \eqref{eq:qc-composition-energy} and ellipticity give
\[
 E_{G_f}(v\circ h)
 \le\Lambda_f E_0(v\circ h)
 \le\Lambda_f K_O(h)^{1/n}E_0(v)
 \le\Lambda_f\Lambda_g K_O(h)^{1/n}E_{G_g}(v).
\]
Combine this with measure equivariance and take the defining supremum for $A_{g,G_g}$. Applying the same argument to $h^{-1}$ gives the other inequality.
\end{proof}

\subsection{Condensers and an atomic obstruction}

A condenser records oscillation between two sets and is therefore well suited to the centered trace inequality. For disjoint compact sets $E,F\subset M$, define the condenser energy
\[
 \gamma_G(E,F)=\inf\{E_G(u):u\in\Dcal,\ 0\le u\le1,
                    \ u|_E=1,\ u|_F=0\}.
\]
This is the $n$th root of the corresponding variational capacity; the test functions here are real-valued.

\begin{proposition}\label{prop:condenser}
For every probability $\nu$ with $A_G(\nu)<\infty$ and every pair of disjoint compact sets $E,F$,
\begin{equation}\label{eq:condenser-necessary}
 \min\{\nu(E),\nu(F)\}\le A_G(\nu)\gamma_G(E,F).
\end{equation}
\end{proposition}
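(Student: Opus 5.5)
Take an admissible function $u\in\Dcal$ with $0\le u\le1$, $u|_E=1$, $u|_F=0$, and show that
\[
 \min\{\nu(E),\nu(F)\}\le\norm{u-\nu(u)}_{L^1(\nu)}.
\]
By the definition \eqref{eq:measure-trace-constant} of $A_G(\nu)$, the right side is at most $A_G(\nu)E_G(u)$. Taking the infimum over admissible $u$ then gives \eqref{eq:condenser-necessary}.

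Two degenerate cases come first. If no admissible $u$ exists, then $\gamma_G(E,F)=\infty$ and there is nothing to prove. If $E$ or $F$ is empty, the left side vanishes. Otherwise admissible functions exist, for instance a smooth Urysohn function, because $E$ and $F$ are disjoint and compact. The case $E_G(u)=0$ cannot occur when both sets are nonempty, since then $u$ would be constant yet equal to $1$ on $E$ and $0$ on $F$. So the quotient defining $A_G(\nu)$ applies to $u$.

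For the key inequality, put $c=\nu(u)\in[0,1]$. Because $|u-c|$ is nonnegative, restricting the integral to $E\cup F$ gives
\[
 \norm{u-c}_{L^1(\nu)}\ge\int_E|1-c|\,\dd\nu+\int_F|0-c|\,\dd\nu
 =(1-c)\nu(E)+c\,\nu(F).
\]
The right side is a convex combination of $\nu(E)$ and $\nu(F)$, so it is at least $\min\{\nu(E),\nu(F)\}$. This is the only nontrivial step, and it does not need $u$ to be real beyond what admissibility already requires. Even so, it is worth checking that $c\in[0,1]$ follows from $0\le u\le1$ and that $\nu$ is a probability measure.

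Combining the two steps gives $\min\{\nu(E),\nu(F)\}\le A_G(\nu)E_G(u)$ for every admissible $u$, and the infimum over $u$ proves the proposition. I expect no real obstacle here. The only care needed is the empty-set and infinite-energy conventions, and the fact that the test functions in $\gamma_G$ lie in $\Dcal$, so they are legitimate competitors in the supremum defining $A_G(\nu)$.
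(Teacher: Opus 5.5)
Your proof is correct and is essentially the paper's argument: fix an admissible $u$, set $c=\nu(u)\in[0,1]$, bound $\norm{u-c}_{L^1(\nu)}$ below by the convex combination $(1-c)\nu(E)+c\,\nu(F)\ge\min\{\nu(E),\nu(F)\}$, then apply the trace inequality and take the infimum over admissible $u$. Your extra degenerate-case bookkeeping is harmless, though the ``no admissible $u$'' case never occurs: a smooth Urysohn function always exists for disjoint compact $E,F$, so $\gamma_G(E,F)<\infty$.
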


\begin{proof}
For an admissible $u$, put $c=\nu(u)\in[0,1]$. Then
\[
 \norm{u-c}_{L^1(\nu)}
 \ge\nu(E)(1-c)+\nu(F)c
 \ge\min\{\nu(E),\nu(F)\}.
\]
Apply the trace inequality and take the infimum over admissible $u$.
\end{proof}

This necessary estimate involves mass on both sides of a separation. It is invariant under simultaneous transport of the measure, structure, and condenser plates in Proposition~\ref{prop:qc-conjugacy}. At the critical Sobolev exponent, cutoffs around a point have arbitrarily small energy. This yields the following obstruction when positive mass remains both at the point and away from it.

\begin{proposition}\label{prop:atomic-obstruction}
Let $\mu_j\rightharpoonup\nu$ be probabilities on the fixed closed $n$-manifold, $n\ge2$. Let $G_j$ have a common ellipticity bound as in \eqref{eq:ellipticity}. If there is a point $x$ with
\[
 0<\nu(\{x\})<1,
\]
then $A_{G_j}(\mu_j)\to\infty$.
\end{proposition}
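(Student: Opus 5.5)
We argue via Proposition~\ref{prop:condenser} applied to a family of condensers around the atom. Set $a=\nu(\{x\})\in(0,1)$. Since $\nu$ is a probability with $a<1$, there is $\delta>0$ with $\nu(M\setminus \bar B(x,2\delta))\ge (1-a)/2>0$ in the background metric; shrinking $\delta$ if needed, we may also assume $\nu(\partial B(x,2\delta))=0$, since only countably many spheres carry mass.

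For $0<\varepsilon<\delta$ we take the condenser plates $E_\varepsilon=\bar B(x,\varepsilon)$ and $F=M\setminus B(x,2\delta)$. The key analytic input is that at the critical exponent a point has zero capacity. Concretely, the logarithmic cutoff
\[
 u_\varepsilon(y)=\min\Big\{1,\max\Big\{0,\frac{\log(2\delta/\dist(x,y))}{\log(2\delta/\varepsilon)}\Big\}\Big\}
\]
is Lipschitz and continuous, equals $1$ on $E_\varepsilon$ and $0$ on $F$, and in a normal chart its background energy satisfies
\[
 E_0(u_\varepsilon)^n\lesssim \log(2\delta/\varepsilon)^{-n}\int_\varepsilon^{2\delta} t^{-n}t^{n-1}\,\dd t=\log(2\delta/\varepsilon)^{1-n}.
\]
This tends to zero as $\varepsilon\to0$ because $n\ge2$. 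By the common ellipticity bound \eqref{eq:ellipticity}, we get $\gamma_{G_j}(E_\varepsilon,F)\le\Lambda\, C\log(2\delta/\varepsilon)^{-(n-1)/n}=:\beta_\varepsilon$, uniformly in $j$.

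Next we pass the masses to $\mu_j$. Weak convergence and the portmanteau theorem give $\liminf_j\mu_j(E_\varepsilon)\ge\nu(\text{int}\,E_\varepsilon)\ge a$, since the open ball contains $x$. Since $F$ is closed we do not get a liminf directly, so we instead use the open set $M\setminus\bar B(x,2\delta)\subset F$: portmanteau gives $\liminf_j\mu_j(F)\ge\nu(M\setminus\bar B(x,2\delta))\ge(1-a)/2$. Hence for $j\ge j_0(\varepsilon)$ we have $\min\{\mu_j(E_\varepsilon),\mu_j(F)\}\ge c:=\tfrac12\min\{a,(1-a)/2\}>0$. If $A_{G_j}(\mu_j)=\infty$ there is nothing to prove for that $j$; otherwise Proposition~\ref{prop:condenser} yields $A_{G_j}(\mu_j)\ge c/\beta_\varepsilon$ for $j\ge j_0(\varepsilon)$. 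Letting $\varepsilon\to0$ shows $\liminf_j A_{G_j}(\mu_j)\ge c/\beta_\varepsilon$ for every $\varepsilon$, and therefore the limit is infinite.

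The main obstacle is the uniform capacity estimate, namely making the log-cutoff computation rigorous on the manifold with varying $G_j$. The common ellipticity bound reduces this to the background metric, and comparability of the metric with the Euclidean one in a normal chart handles the rest. The portmanteau step is routine once the plates are chosen with open/closed sets oriented as above.
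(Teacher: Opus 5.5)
Your proof is correct. Its analytic core is the paper's: logarithmic cutoffs have critical background energy of order $\log(\cdot)^{(1-n)/n}$, and the common ellipticity bound makes this estimate uniform in $j$.

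The difference lies in how you transfer mass from $\nu$ to $\mu_j$.

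\begin{itemize}
\item[\textbf{Paper.}] The paper's cutoffs equal one on $|w|\le\varepsilon^2$ and are supported in $|w|\le\varepsilon$. It tests them directly in the definition of $A_{G_j}(\mu_j)$. For fixed $\varepsilon$, weak convergence gives $\mu_j(u_\varepsilon)\to\nu(u_\varepsilon)$, and hence convergence of the centered $L^1(\mu_j)$ norms to $\norm{u_\varepsilon-\nu(u_\varepsilon)}_{L^1(\nu)}$. Dominated convergence then identifies the limit of that quantity as $2p(1-p)$.
\item[\textbf{Yours.}] You keep the outer plate fixed and let only the inner radius shrink. You apply Proposition~\ref{prop:condenser} to each $\mu_j$ separately. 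The only limit information you need is lower semicontinuity of $\mu_j$ on open sets (portmanteau), which keeps both plate masses above a constant $c$ independent of $\varepsilon$.
\end{itemize}

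What each route buys:

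\begin{itemize}
\item[\textbf{Yours.}] It never computes a limit of centered oscillations. It also makes explicit that the obstruction is exactly the two-plate condenser phenomenon the paper points to after Proposition~\ref{prop:condenser}.
\item[\textbf{Paper.}] It tracks the exact limiting oscillation $2p(1-p)$ rather than the cruder $c=\tfrac12\min\{a,(1-a)/2\}$. It also needs no auxiliary radius $\delta$ chosen so that $\nu$ retains mass away from the atom.
\end{itemize}

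Minor points:

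\begin{itemize}
\item Take $2\delta$ below the injectivity radius so that the normal-chart polar estimate is legitimate.
\item The capacity bound should carry $C^{1/n}$ rather than $C$.
\item The condition $\nu(\partial B(x,2\delta))=0$ is never used, since you pass to the open set $M\setminus\bar B(x,2\delta)$.
\end{itemize}
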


\begin{proof}
Put $p=\nu(\{x\})$ and choose smooth coordinates $w$ near $x$ with $w(x)=0$. Let $\theta\colon\R\to[0,1]$ be smooth, equal to one on $(-\infty,0]$ and zero on $[1,\infty)$. For sufficiently small $\varepsilon>0$, define
\begin{equation}\label{eq:log-cutoff}
 u_\varepsilon(w)=
 \theta\left(\frac{\log(|w|/\varepsilon^2)}{\log(1/\varepsilon)}\right)
 \quad(w\ne0),\qquad u_\varepsilon(x)=1,
\end{equation}
and extend by zero outside the chart. The function is smooth, equals one when $|w|\le\varepsilon^2$, and vanishes when $|w|\ge\varepsilon$. With $E_0$ the background critical energy, comparison in the coordinate chart and polar integration give
\begin{equation}\label{eq:cutoff-energy}
 E_0(u_\varepsilon)^n
 \le\frac{C}{\log(1/\varepsilon)^n}
        \int_{\varepsilon^2}^{\varepsilon}\frac{\dd s}{s}
 =C\log(1/\varepsilon)^{1-n}\longrightarrow0.
\end{equation}
The common ellipticity bound gives $E_{G_j}(u_\varepsilon)\le\Lambda E_0(u_\varepsilon)$ for all $j$.

The functions $u_\varepsilon$ converge pointwise to $\mathbf1_{\{x\}}$ and lie between zero and one. Dominated convergence therefore gives
\[
 \norm{u_\varepsilon-\nu(u_\varepsilon)}_{L^1(\nu)}
 \longrightarrow 2p(1-p)>0.
\]
For each fixed $\varepsilon$, weak convergence yields $\mu_j(u_\varepsilon)\to\nu(u_\varepsilon)$. Consequently the centered absolute-value functions converge uniformly, and
\[
 \norm{u_\varepsilon-\mu_j(u_\varepsilon)}_{L^1(\mu_j)}
 \longrightarrow
 \norm{u_\varepsilon-\nu(u_\varepsilon)}_{L^1(\nu)}.
\]
It follows that
\[
 \liminf_{j\to\infty}A_{G_j}(\mu_j)
 \ge\frac{\norm{u_\varepsilon-\nu(u_\varepsilon)}_{L^1(\nu)}}
          {\Lambda E_0(u_\varepsilon)}.
\]
Letting $\varepsilon\downarrow0$ proves divergence.
\end{proof}

For a point mass, centering gives $A_G(\delta_x)=0$. The family of conjugates of $z^2$ in Section~\ref{subsec:rational-conjugates} illustrates the corresponding concentration with a constant positive value of $A_f$.

\subsection{Compactness in rational moduli}\label{subsec:moduli}

We now restrict to rational maps of a fixed degree $d\ge2$, with the standard conformal structure on the unit sphere $S^2\subset\R^3$. The atomic obstruction becomes a compactness criterion once the equilibrium measures are normalized by M\"obius transformations. We first recall the normalization in a form that applies to singular measures.

\begin{lemma}\label{lem:barycenter-normalization}
Let $\nu$ be a non-atomic Borel probability measure on $S^2$. There is an orientation-preserving M\"obius transformation $h$ such that
\begin{equation}\label{eq:barycenter-normalization}
 \int_{S^2}x\,\dd(h_*\nu)(x)=0\in\R^3.
\end{equation}
\end{lemma}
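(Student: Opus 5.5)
The plan is the standard Douady--Earle argument, using the conformal barycenter and a degree argument. Identify the orientation-preserving M\"obius group with the isometries of hyperbolic space $\mathbb B^3$ acting on the boundary $S^2$. For $a\in\mathbb B^3$, let $h_a$ be the hyperbolic translation carrying $a$ to the origin along the geodesic through $0$ and $a$, with $h_0=\mathrm{id}$. Define
\[
 \Phi(a)=\int_{S^2}h_a(x)\,\dd\nu(x)\in\R^3 .
\]
We seek $a$ with $\Phi(a)=0$. Since $|h_a(x)|=1$, we have $|\Phi(a)|\le1$, and $\Phi$ is continuous in $a$ by dominated convergence, because $h_a(x)$ depends continuously on $(a,x)$.

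\textbf{Step 1: boundary behaviour.} As $a\to\xi\in S^2$, the maps $h_a$ converge to the constant $-\xi$ uniformly on compact subsets of $S^2\setminus\{\xi\}$. The translation pushes everything except a shrinking neighbourhood of $\xi$ toward the antipode, with our sign convention. Since $\nu$ is non-atomic, $\nu(B(\xi,\delta))\to0$ as $\delta\to0$, uniformly in $\xi$ by compactness. Splitting the integral over $B(\xi,\delta)$ and its complement then gives
\[
 \Phi(a)+\frac{a}{|a|}\to0 \quad\text{as } |a|\to1,
\]
uniformly in the direction. This uniformity is the step I expect to need the most care. It requires the explicit formula for $h_a$, showing that for $|x-\xi|\ge\delta$ one has $|h_a(x)+a/|a||\to0$ uniformly as $|a|\to1$ and $|a/|a|-\xi|<\delta/2$, together with the uniform smallness of $\nu$-mass in small balls. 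That smallness holds because a non-atomic measure on a compact space has $\sup_\xi\nu(B(\xi,\delta))\to0$, which follows by contradiction and upper semicontinuity on closed balls.

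\textbf{Step 2: degree argument.} Extend $\Phi$ to a continuous map $\overline{\mathbb B^3}\to\overline{\mathbb B^3}$ by setting $\Phi(\xi)=-\xi$ on the boundary. This extension is continuous by Step 1. Suppose $\Phi$ never vanished. Then $\Phi/|\Phi|$ would be a continuous map $\overline{\mathbb B^3}\to S^2$ whose boundary restriction is the antipodal map, which has degree $-1$. That map would then be null-homotopic through the ball, which is a contradiction. Brouwer's theorem therefore gives $a$ with $\Phi(a)=0$. Setting $h=h_a$, which is orientation preserving, we get
\[
 \int x\,\dd(h_*\nu)=\Phi(a)=0,
\]
which is \eqref{eq:barycenter-normalization}.

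If the explicit formula becomes awkward, I would instead use the Douady--Earle vector field. Consider $\xi(a)=\int (\text{unit tangent at } a \text{ toward } x)\,\dd\nu(x)$, whose vanishing is equivalent to the barycenter of $(h_a)_*\nu$ being $0$ after translating $a$ to the origin. The same boundary estimate on non-atomic measures then runs in that language. Either way, the essential use of non-atomicity is to prevent mass from concentrating at the boundary point $\xi$, where it would spoil the antipodal boundary behaviour.
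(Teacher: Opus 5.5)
Your proof is correct, and it takes a different route from the paper.

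The paper does not construct the zero itself. It cites Douady--Earle for two facts: the hyperbolic gradient of the potential $h_\nu$ has a unique zero $B(\nu)$, and this zero is equivariant, $B(h_*\nu)=h(B(\nu))$. It then computes that the Euclidean gradient of $h_\nu$ at the origin is $\int u\,\dd\nu(u)$. By uniqueness of the zero, $B(\nu)=0$ exactly when the Euclidean barycenter vanishes. Moving $B(\nu)$ to the origin finishes the proof.

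You instead find a zero of $\Phi(a)=\int h_a\,\dd\nu$ directly by a Brouwer degree argument. Your boundary estimate is quick with the explicit formula: for $x\in S^2$,
\[
 \abs{h_a(x)+a}=\frac{1-|a|^2}{|x-a|}.
\]
Combined with the uniform smallness of $\nu$ on small balls, this shows that $\Phi$ extends continuously with antipodal boundary values. The degree argument then gives the zero.

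What each approach buys:
\begin{itemize}
\item Your argument is self-contained apart from elementary degree theory. It needs neither the convexity behind Douady--Earle's uniqueness nor equivariance. With a straight-line homotopy to $a\mapsto -a/|a|$, it also extends to measures whose atoms all have mass below $1/2$, which is the Douady--Earle hypothesis.
\item The paper's argument is shorter given the citation, and it yields more. The normalizing point is unique, so $h$ is determined up to a rotation.
\end{itemize}
That extra uniqueness is not used in the proof of Theorem~\ref{thm:moduli-properness}. There, only the existence of a representative in $\mathcal N_{d,C}$ is needed, so your existence-only argument fully suffices for the paper's purposes.
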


\begin{proof}
For a non-atomic probability measure $\nu$ on $S^2$, the conformal barycenter construction of Douady and Earle \cite[Section 11]{DE86} can be described using the potential
\[
 h_\nu(x)=\frac12\int_{S^2}\log\frac{1-|x|^2}{|x-u|^2}\,\dd\nu(u),
 \qquad x\in B^3,
\]
where $B^3$ is the open unit ball. Let $\xi_\nu$ be the gradient of $h_\nu$ in the hyperbolic metric. This vector field has a unique zero $B(\nu)\in B^3$, and the barycenter satisfies
\[
 B(h_*\nu)=h(B(\nu)).
\]
Here a M\"obius transformation $h$ of $S^2$ is extended to its hyperbolic isometry of $B^3$. See also \cite[Section 2]{Pet11}.

The vanishing criterion follows from this description. Differentiating under the integral at the origin, the first term contributes nothing and $\nabla_x\log|x-u|^2$ equals $-2u$ there, since $|u|=1$. Hence the Euclidean gradient of $h_\nu$ at the origin is $\int_{S^2}u\,\dd\nu(u)$. The hyperbolic gradient at the origin is a positive multiple of the Euclidean one, so $\xi_\nu(0)$ vanishes precisely when $\int_{S^2}x\,\dd\nu(x)=0$, and uniqueness of the zero gives
\begin{equation}\label{eq:barycenter-vanishing}
 B(\nu)=0\ \Longleftrightarrow\ \int_{S^2}x\,\dd\nu(x)=0.
\end{equation}
Choose an orientation-preserving $h$ taking $B(\nu)$ to the origin; equivariance together with \eqref{eq:barycenter-vanishing} gives \eqref{eq:barycenter-normalization}.
\end{proof}

The degeneration theorem of DeMarco and Faber \cite[Theorem A]{DF14} applies to any sequence $g_j\in\mathrm{Rat}_d$ leaving every compact subset of $\mathrm{Rat}_d$: if $\mu_{g_j}\rightharpoonup\nu$, then $\nu$ is purely atomic. We combine this sequence theorem with barycenter normalization and Proposition~\ref{prop:atomic-obstruction}.

\begin{proof}[Proof of Theorem~\ref{thm:moduli-properness}]
Proposition~\ref{prop:qc-conjugacy} gives
\begin{equation}\label{eq:mobius-moduli-invariance}
 \mu_{h\circ f\circ h^{-1}}=h_*\mu_f,
 \qquad A_{h\circ f\circ h^{-1}}=A_f
\end{equation}
for every orientation-preserving M\"obius transformation $h$: the transported standard conformal structure is again standard. Thus $A_f$ is constant on conjugacy classes. Proposition~\ref{prop:rational-continuity} and the definition of the quotient topology show that the induced function on $\mathcal M_d$ is continuous.

Fix $C>0$ and consider the normalized sublevel set
\begin{equation}\label{eq:normalized-sublevel}
 \mathcal N_{d,C}
 =\left\{g\in\mathrm{Rat}_d:A_g\le C,
             \ \int_{S^2}x\,\dd\mu_g(x)=0\right\}.
\end{equation}
We prove that $\mathcal N_{d,C}$ is compact in $\mathrm{Rat}_d$. Let $(g_j)$ be a sequence in this set. The projective coefficient compactification $\overline{\mathrm{Rat}_d}=\mathbb P^{2d+1}$ is compact. Passing to a subsequence, we may therefore assume that $g_j\to g_\infty$ in this compactification and that $\mu_{g_j}\rightharpoonup\nu$ as probabilities on $S^2$.

Suppose that $g_\infty\notin\mathrm{Rat}_d$. Then $(g_j)$ leaves every compact subset of $\mathrm{Rat}_d$. By the degeneration theorem of DeMarco and Faber \cite[Theorem A]{DF14}, $\nu$ is a countable sum of atoms. The coordinate functions on $S^2$ are continuous, so weak convergence and \eqref{eq:normalized-sublevel} give
\[
 \int_{S^2}x\,\dd\nu(x)=0.
\]
A point mass $\delta_p$, with $p\in S^2$, has Euclidean barycenter $p\ne0$. Hence $\nu$ is not a point mass. As a purely atomic probability, it has an atom of mass strictly between zero and one. Proposition~\ref{prop:atomic-obstruction}, applied with the fixed standard structure, implies $A_{g_j}\to\infty$, contrary to $A_{g_j}\le C$.

It follows that $g_\infty\in\mathrm{Rat}_d$. Proposition~\ref{prop:rational-continuity} gives $A_{g_\infty}\le C$. The dual energy convergence established in its proof applies to the smooth coordinate functions, giving
\[
 \int_{S^2}x\,\dd\mu_{g_\infty}(x)=0,
\]
so $g_\infty\in\mathcal N_{d,C}$. Every sequence in $\mathcal N_{d,C}$ therefore has a subsequence converging in that set. Since coefficient space is metrizable, $\mathcal N_{d,C}$ is compact.

Every equilibrium measure $\mu_f$ is non-atomic. By Lemma~\ref{lem:barycenter-normalization} and \eqref{eq:mobius-moduli-invariance}, each conjugacy class in $\mathcal K_{d,C}$ has a representative in $\mathcal N_{d,C}$. Conversely, every element of $\mathcal N_{d,C}$ represents a class in $\mathcal K_{d,C}$. The quotient map therefore sends the compact set $\mathcal N_{d,C}$ onto $\mathcal K_{d,C}$, proving compactness of the latter.

If $A_f$ is uniformly bounded on $\mathcal F$, its image in moduli lies in one of these compact sublevel sets, which is closed by continuity of $A_f$. Its closure is consequently compact. Conversely, continuity bounds $A_f$ on the compact closure of any relatively compact image. This proves \eqref{eq:moduli-uniformity}. Finally, the inverse image of any compact subset of $\R$ is closed and lies in a compact sublevel set, proving properness.
\end{proof}

The compactness theorem also gives an extremal consequence for the exact mixing norm.

\begin{corollary}\label{cor:moduli-minimum}
For every $d\ge2$, there is a rational map $f_d$ of degree $d$ such that
\[
 0<A_{f_d}=\min_{f\in\mathrm{Rat}_d}A_f.
\]
The set of minimizing conjugacy classes is compact in $\mathcal M_d$. These classes minimize the centered transfer norm in \eqref{eq:sharp-mixing} for every fixed $k\ge0$ among all rational maps of degree $d$.
\end{corollary}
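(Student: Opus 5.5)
The plan is to derive the corollary directly from Theorem~\ref{thm:moduli-properness}, Theorem~\ref{thm:main}, and the positivity in Theorem~\ref{thm:main}.

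\emph{Existence of a minimizer.} Conformal invariance means $A_f$ descends to a function $\bar A$ on $\mathcal M_d$. By Theorem~\ref{thm:moduli-properness} this function is continuous and proper. Put $a=\inf_{\mathcal M_d}\bar A$, which is finite because $\mathrm{Rat}_d$ is nonempty. Fix any class $[g]$ and set $C=A_g$. The sublevel set $\mathcal K_{d,C}$ is then nonempty and compact, and the infimum of $\bar A$ over $\mathcal M_d$ equals its infimum over $\mathcal K_{d,C}$. A continuous function attains its minimum on a nonempty compact set, so there is a class $[f_d]$ with $A_{f_d}=a$. Since $A_f$ is constant on conjugacy classes, this minimum over moduli is also the minimum over $\mathrm{Rat}_d$.

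\emph{Positivity.} Theorem~\ref{thm:main}, applied with $n=2$ and the standard structure, gives $A_{f_d}>0$. The argument there uses non-atomicity of $\mu_{f_d}$.

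\emph{Compactness of the minimizing set.} The set of minimizing classes is $\bar A^{-1}(\{a\})$. It is closed because $\bar A$ is continuous, and it is contained in the compact set $\mathcal K_{d,a}$. A closed subset of a compact set in the Hausdorff space $\mathcal M_d$ is compact.

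\emph{Minimization of the transfer norms.} Fix $k\ge0$. By \eqref{eq:sharp-mixing}, the centered transfer norm of $f$ equals $A_f d^{-k/2}$. Here $r=d^{-1/2}$ depends only on the degree. Multiplying by this fixed positive factor preserves the order of the constants, so the same classes minimize the norm for every $k$.

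The only point needing care is reducing the infimum to a compact sublevel set. This requires the sublevel set to be nonempty, which the choice $C=A_g$ guarantees. No step presents a real obstacle.
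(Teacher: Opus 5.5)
Your proposal is correct and follows essentially the same route as the paper. You restrict to the nonempty compact sublevel set $\mathcal K_{d,A_g}$, attain the minimum by continuity, and get positivity from Theorem~\ref{thm:main}; the minimizing classes then form a closed subset of that compact set, and \eqref{eq:sharp-mixing} rescales every $A_f$ by the same factor $d^{-k/2}$.
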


\begin{proof}
Choose $f_0\in\mathrm{Rat}_d$. The sublevel set $\mathcal K_{d,A_{f_0}}$ is nonempty and compact. The continuous function $A_f$ attains its minimum there, and that minimum is global. It is positive by Theorem~\ref{thm:main}. The minimizing classes form a closed subset of this compact sublevel set. Finally, \eqref{eq:sharp-mixing} differs from $A_f$ by the same positive factor $d^{-k/2}$ for every map of degree $d$.
\end{proof}

Corollary~\ref{cor:moduli-minimum} leads to the question of identifying the minimizing classes. In particular, can a minimizing class be represented by a Latt\`es map? Zdunik's characterization of Latt\`es maps by absolute continuity of the maximal entropy measure \cite{Zdu90} makes these natural test cases, with \eqref{eq:density-bound} providing an upper estimate for their trace constants.

The trace and conjugacy arguments above hold in all dimensions. Extending the moduli compactness conclusion would require a compactification of conjugacy classes of pairs $(f,G)$, normalization retaining uniform ellipticity, and control of boundary equilibrium measures. In the rational case, these ingredients are supplied by the coefficient compactification, conformal barycenters, and DeMarco--Faber's theorem. Explicit higher-dimensional uniformly quasiregular dynamics on which such a theory would act is available: see, for instance, Mayer's Latt\`es-type constructions \cite{May97}, the Cantor Julia sets of Fletcher--Wu \cite{FW15}, Fletcher--Stoertz \cite{FS22} and Fletcher--Stoertz--Vellis \cite{FSV} in three dimensions, and the constructions of Iwaniec-Martin \cite{IM96} and Nicks \cite{Nic26}, which produce Cantor Julia sets in every dimension $n\ge3$.

\section{Examples and the failure of uniform constants}\label{sec:examples}

The constant-Jacobian case shows when the trace inequality reduces to Sobolev--Poincar\'e, and expanding tori then make the sharp rates explicit through Fourier modes. Rational families afterwards illustrate two different behaviors of the trace constant: invariance under conformal changes of coordinates, and divergence under degeneration.

\subsection{Constant Jacobian and expanding tori}\label{subsec:tori}

If $J_f=d$ almost everywhere, then $b_f=0$. Lemma~\ref{lem:telescoping} gives $\mu=m$, and $A_f\le2S_1$. Thus the trace inequality reduces to Sobolev--Poincar\'e in this case. This hypothesis cannot hold for a rational map of degree at least two on $S^2$: such a map has critical points, where its continuous spherical Jacobian vanishes.

For an integer $\ell\ge2$, consider
\[
 f(x)=\ell x\pmod{\mathbb Z^n},\qquad x\in\T^n.
\]
This is a conformal covering of degree $d=\ell^n$, with standard $G$, constant Jacobian, and equilibrium measure equal to Haar probability. If $e_\nu(x)=\exp(2\pi i\nu\cdot x)$, then
\[
 \Lop e_\nu=
 \begin{cases}
 e_{\nu/\ell},&\nu\in\ell\mathbb Z^n,\\
 0,&\nu\notin\ell\mathbb Z^n.
 \end{cases}
\]
In particular, for $0<\alpha\le1$ the functions
\[
 \varphi_k(x)=\ell^{-\alpha k}e_{\ell^k e_1}(x)
\]
have uniformly bounded $C^\alpha$ seminorms, mean zero, and
\[
 \norm{\Lop^k\varphi_k}_{L^1(m)}
 =\ell^{-\alpha k}=d^{-\alpha k/n}.
\]
This proves that the H\"older exponent in Theorem~\ref{thm:holder} is optimal in general. Here the test functions vary with the iterate, as permitted by the norm in Theorem~\ref{thm:holder}.

\subsection{Rational maps and conformal changes of coordinates}\label{subsec:rational-conjugates}

On $S^2$, the standard conformal structure gives the normalized Dirichlet energy in \eqref{eq:rational-energy}. Proposition~\ref{prop:qc-conjugacy} then specializes to exact invariance under a M\"obius change of coordinates, since the transported standard structure remains standard. If $h$ is a M\"obius transformation and $f^h=h\circ f\circ h^{-1}$, then $\mu_{f^h}=h_*\mu_f$ and
\[
 E(\varphi\circ h)=E(\varphi),\qquad A_{f^h}=A_f.
\]
This gives a family with concentrating equilibrium measures and a fixed positive trace constant. Namely, $h_a(z)=az$ conjugates $z^2$ to $z^2/a$. As $a\downarrow0$, its equilibrium measure, normalized angular measure on $|z|=a$, converges to $\delta_0$, while $A_{z^2/a}=A_{z^2}$. The family represents a single point of $\mathcal M_2$, illustrating why Theorem~\ref{thm:moduli-properness} is formulated modulo M\"obius conjugacy.

Here centering removes the oscillation of every fixed continuous test function in the point-mass limit. In the degeneration below, mass persists both at an atom and away from it, giving positive centered oscillation for cutoffs of arbitrarily small energy. The two masses are exactly what the condenser estimate \eqref{eq:condenser-necessary} observes.

The discrepancy constant $b_f$ is more sensitive to the choice of background volume. For this last family it tends to infinity. If it remained bounded along a sequence $a\downarrow0$, \eqref{eq:potential-zero} would pass to the limit and give
\[
 |v(0)-m(v)|\le C E(v),\qquad v\in C^\infty(S^2).
\]
Smooth cutoffs equal to one near zero, with supports shrinking to zero and Dirichlet energies tending to zero, contradict this inequality. The cutoffs in \eqref{eq:log-cutoff} have these properties. Thus $b_f$ and the Jacobian norm in \eqref{eq:main-constant} can diverge while $A_f$ stays fixed. Their dependence on background volume explains the difference from the conjugacy-invariant constant $A_f$.

\subsection{A quadratic degeneration}

We now prove the divergence asserted in Theorem~\ref{thm:nonuniform-intro}. Each $f_t$ in \eqref{eq:degenerate-family} has degree two: its numerator and denominator have no common zero for $0<t<1$. As each $f_t$ is rational, it is uniformly $1$-quasiregular on $S^2$. Write $\mu_t=\mu_{f_t}$ and $\Lop_t$ for its normalized transfer operator.

The following limiting measure is a special case of the atomic boundary measures in \cite{DeM05}. The calculation also supplies the convergence needed for the trace argument.

\begin{lemma}\label{lem:atomic-limit}
As $t\downarrow0$,
\begin{equation}\label{eq:atomic-limit}
 \mu_t\ \rightharpoonup\
 \nu:=\sum_{j=0}^{\infty}2^{-j-1}\delta_{2^j}.
\end{equation}
\end{lemma}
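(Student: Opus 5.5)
The plan is to pass to the limit in the invariance relation $\mu_t(\varphi)=\mu_t(\Lop_t\varphi)$, $\varphi\in C(S^2)$, recorded after Lemma~\ref{lem:telescoping}. The limit of $\Lop_t$ should be an explicit degenerate averaging operator. Its invariant probability will turn out to be unique and equal to $\nu$. Weak-$*$ compactness of probabilities on $S^2$ then gives convergence of the whole family.

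\textbf{Step 1: the inverse-image equation.} The equation $f_t(z)=y$ reads
\[
 z^2-(1+2y)z+y(2-t)=0 .
\]
At $t=0$ this factors as $(z-2y)(z-1)=0$. I will view the fiber of $f_t$ over $y$, counted with multiplicity, as the unordered root pair of a homogeneous quadratic on $\mathbb P^1$. Homogenizing in both $y$ and $z$ allows $y=\infty$: there the equation becomes $2(z-1)+t=0$ together with the root $z=\infty$. The coefficients depend continuously on $(y,t)\in S^2\times[0,1)$, and the quadratic never degenerates identically. By continuity of roots (Hurwitz), the unordered pair $\{z_1(y,t),z_2(y,t)\}$ is therefore continuous in the symmetric product, and it tends to $\{2y,1\}$ uniformly in $y\in S^2$ as $t\downarrow0$.

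\textbf{Step 2: uniform convergence of the transfer operators.} From Step 1 and uniform continuity of $\varphi$ on $S^2$, I will deduce that
\[
 \Lop_t\varphi(y)=\tfrac12\bigl(\varphi(z_1)+\varphi(z_2)\bigr)\longrightarrow \Lop_0\varphi(y):=\tfrac12\bigl(\varphi(2y)+\varphi(1)\bigr)
\]
uniformly on $S^2$. Here $2\cdot\infty=\infty$.

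The delicate points are $y$ near $1/2$, where the two limiting roots collide, and $y$ near $\infty$. I expect this to be the main obstacle. Working with symmetric functions of the root pair avoids labelling branches, and that should resolve both cases.

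\textbf{Step 3: identify the limit and conclude.} Let $\nu'$ be any weak limit of $\mu_{t_i}$ along a sequence $t_i\downarrow0$. Uniform convergence together with $\mu_t(\varphi)=\mu_t(\Lop_t\varphi)$ gives $\nu'(\varphi)=\nu'(\Lop_0\varphi)$. Equivalently,
\[
 \nu'=\tfrac12 D_*\nu'+\tfrac12\delta_1,\qquad D(z)=2z .
\]
Iterating this $N$ times gives
\[
 \nu'=\sum_{j=0}^{N-1}2^{-j-1}\delta_{2^j}+2^{-N}D^N_*\nu' .
\]
The remainder has total mass $2^{-N}$, so on any continuous $\varphi$ it contributes at most $2^{-N}\norm{\varphi}_\infty$. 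Letting $N\to\infty$ yields $\nu'=\nu$.

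The space of probabilities on $S^2$ is weak-$*$ compact, and every subsequential limit equals $\nu$. Hence $\mu_t\rightharpoonup\nu$, which proves \eqref{eq:atomic-limit}.
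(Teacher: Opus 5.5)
Your proposal is correct and follows the paper's proof essentially step for step. The shared steps are: the homogenized fiber equation with nondegenerate limit $(Z-S)(WZ-2YS)$; continuity of unordered root pairs, which gives $\Lop_t\varphi\to\tfrac12\varphi(1)+\tfrac12\varphi(2y)$ uniformly; passage to the limit in $\mu_t(\varphi)=\mu_t(\Lop_t\varphi)$; and iteration of $\nu'=\tfrac12\delta_1+\tfrac12 D_*\nu'$, which forces every subsequential limit to equal $\nu$. The collision at $y=1/2$ and the point $y=\infty$ that you flag are resolved exactly as you anticipate, since multiplicities are retained in the symmetric product and uniformity follows from continuity on the compact set $S^2\times[0,1/2]$.
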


\begin{proof}
The equation $f_t(z)=y$ is
\[
 z(z-1)-y\big(2(z-1)+t\big)=0.
\]
At $t=0$ its roots, with multiplicity, are $1$ and $2y$. This statement includes $y=\infty$ when read homogeneously. Indeed, for $y=[Y:W]$, the limiting homogeneous polynomial is
\[
 (Z-S)(WZ-2YS),
\]
which is not identically zero for any $[Y:W]\in\mathbb P^1$. Continuity of roots as unordered pairs on the sphere therefore gives, for each $\varphi\in C(S^2)$,
\begin{equation}\label{eq:Q-limit}
 \Lop_t\varphi\longrightarrow Q\varphi
 \quad\text{uniformly on }S^2,
 \qquad Q\varphi(y)=\tfrac12\varphi(1)+\tfrac12\varphi(2y).
\end{equation}
To see uniformity explicitly, a sequence of points where it failed would have a convergent subsequence $y_t\to y$. The homogeneous polynomials and their unordered root pairs would converge at that subsequence to the nonzero limiting polynomial above, contradicting continuity of $\varphi$. Double roots cause no difficulty because multiplicities are retained.

Take any weakly convergent subsequence of the probabilities $\mu_t$ and call its limit $\nu_0$. Stationarity and \eqref{eq:Q-limit} give
\[
 \nu_0(\varphi)=\nu_0(Q\varphi),
 \quad\text{or equivalently}\quad
 \nu_0=\tfrac12\delta_1+\tfrac12h_*\nu_0,
 \qquad h(y)=2y.
\]
Iterating the last identity $N$ times gives
\[
 \nu_0=\sum_{j=0}^{N-1}2^{-j-1}\delta_{2^j}
       +2^{-N}(h^N)_*\nu_0.
\]
The last term has mass $2^{-N}$, so the stationary probability is uniquely the measure in \eqref{eq:atomic-limit}. Every subsequential limit is therefore $\nu$, proving convergence of the whole family.
\end{proof}

\begin{proof}[Proof of Theorem~\ref{thm:nonuniform-intro}]
The measure in \eqref{eq:atomic-limit} has an atom of mass $1/4$ at $2$. The background energy is fixed along the family, so Proposition~\ref{prop:atomic-obstruction} gives $A_{f_t}\to\infty$. More explicitly, take the cutoffs in \eqref{eq:log-cutoff} centered at $2$. For all sufficiently small $\varepsilon$, their supports contain no other atom of $\nu$, and hence
\[
 \nu(u_\varepsilon)=\tfrac14,
 \qquad
 \norm{u_\varepsilon-\nu(u_\varepsilon)}_{L^1(\nu)}=\tfrac38.
\]
For each fixed $\varepsilon$, weak convergence gives
\[
 \liminf_{t\downarrow0}A_{f_t}\ge\frac{3}{8E(u_\varepsilon)}.
\]
Now \eqref{eq:cutoff-energy} lets $\varepsilon\downarrow0$. By \eqref{eq:sharp-mixing}, the centered transfer norms diverge at every fixed $k$. Here $n=2$, $K=1$, and $d=2$ throughout, proving the final assertion.
\end{proof}

\begin{corollary}\label{cor:jacobian-divergence}
For the family \eqref{eq:degenerate-family},
\[
 \norm{J_{f_t}/2-1}_{L^q(m)}\longrightarrow\infty
 \qquad(t\downarrow0)
\]
for every fixed $q>1$.
\end{corollary}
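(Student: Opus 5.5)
The plan is to argue by contradiction, combining Proposition~\ref{prop:prefactor-bound} with Theorem~\ref{thm:nonuniform-intro}. Fix $q>1$. Every $f_t$ is rational with a continuous spherical Jacobian, so $J_{f_t}\in L^\infty(m)\subset L^q(m)$. Proposition~\ref{prop:prefactor-bound} therefore applies with this $q$, giving
\[
 A_{f_t}\le 2\left(S_1+\frac{S_{q'}}{1-r}\norm{\frac{J_{f_t}}{2}-1}_{L^q(m)}\right),\qquad r=2^{-1/2}.
\]
The constants $S_1$ and $S_{q'}$ come from the Sobolev--Poincar\'e inequality \eqref{eq:sobolev-poincare} on the fixed round sphere with the standard structure. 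They depend only on $q$ and the background geometry, not on $t$.

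Now suppose the conclusion fails. Then there are a sequence $t_j\downarrow0$ and a constant $B$ with $\norm{J_{f_{t_j}}/2-1}_{L^q(m)}\le B$ for all $j$. The displayed bound gives $A_{f_{t_j}}\le 2(S_1+S_{q'}B/(1-r))$, uniformly in $j$. This contradicts Theorem~\ref{thm:nonuniform-intro}, which gives $A_{f_t}\to\infty$. So the norm must diverge along the whole family, for each fixed $q>1$.

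No step here is a serious obstacle. The only point to check is that the constants in \eqref{eq:main-constant} are independent of $t$. They are: $S_p$ is defined purely through the fixed background metric and $G$, the ellipticity constant is $\Lambda=1$, and $r$ depends only on $d=2$ and $n=2$. One could also argue directly, as in the discussion of $b_f$ for the conjugates $z^2/a$. There a bounded $b_f$ would pass the potential estimate \eqref{eq:potential-zero} to the atomic limit $\nu$ of Lemma~\ref{lem:atomic-limit}, and the log cutoffs \eqref{eq:log-cutoff} centered at $2$ would violate it. However, routing through $A_{f_t}$ is shorter.
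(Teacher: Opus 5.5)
Your proposal is correct and is essentially the paper's proof. You use the bound \eqref{eq:main-constant} of Proposition~\ref{prop:prefactor-bound}, whose constants $S_1$, $S_{q'}$ and $r=2^{-1/2}$ do not depend on $t$, together with the divergence $A_{f_t}\to\infty$ from Theorem~\ref{thm:nonuniform-intro}, to rule out a bounded subsequence of Jacobian norms.
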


\begin{proof}
Each norm is finite for $t>0$, since a rational map is smooth as a map of the sphere. In \eqref{eq:main-constant} the constants $S_{q'}$ and $r=2^{-1/2}$ are fixed along the family, so a bounded subsequence of Jacobian norms would bound $A_{f_t}$ along that subsequence, contradicting Theorem~\ref{thm:nonuniform-intro}.
\end{proof}

The example separates the fixed, degree-determined exponential rate from the map-dependent size of its prefactor. Since $A_f$ is bounded on compact subsets of $\mathcal M_2$, the divergence $A_{f_t}\to\infty$ also shows that $[f_t]$ escapes every compact subset of $\mathcal M_2$.

\subsection*{Acknowledgements}
ChatGPT and Claude assisted with proof development, literature searches, mathematical exposition, editing, and LaTeX preparation. The authors are responsible for the mathematical content and its attribution.


\end{document}